\theoremstyle{plain}
\newtheorem{theorem}{Theorem}[section]
\newtheorem{cor}[theorem]{Corollary}
\newtheorem{prop}[theorem]{Proposition}
\theoremstyle{definition}
\newtheorem{dfn}[theorem]{Definition}
\newtheorem{rem}[theorem]{Remark}
\newcommand{\ve}{\varepsilon}
\newcommand{\R}{\mathbb{R}}
\newcommand{\N}{\mathbb{N}}
\newcommand{\Q}{\mathcal{Q}}
\newcommand{\E}{\mathscr{E}}
\begin{document}

\title[Linking over cones for the Neumann Fractional $p-$Laplacian]{Linking over cones for the Neumann Fractional $p-$Laplacian}

\author[Dimitri Mugnai]{Dimitri Mugnai}
\author[Edoardo Proietti Lippi] {Edoardo Proietti Lippi}

\address[D. Mugnai]{Department of Ecology and Biology (DEB) \newline\indent
	 Tuscia University\newline \indent
	Largo dell'Universit\`a, 01100 Viterbo, Italy}
\email{dimitri.mugnai@unitus.it}

\address[E. Proietti Lippi]{Department of Mathematics and Computer Science \newline\indent University of Florence\newline\indent
Viale Morgagni 67/A, 50134 Firenze - Italy}
\email{edoardo.proiettilippi@unifi.it}

\begin{abstract}
We consider nonlinear problems governed by the fractional $p-$Laplacian in presence of nonlocal Neumann boundary conditions. We face two problems. First: the $p-$superlinear term may not satisfy the Ambrosetti-Rabinowitz condition. Second, and more important: although the topological structure of the underlying functional reminds the one of the linking theorem, the nonlocal nature of the associated eigenfunctions prevents the use of such a classical theorem. For these reasons, we are led to adopt another approach, relying on the notion of linking over cones.
\end{abstract}
\maketitle

Keywords: fractional $p-$Laplacian, Neumann boundary conditions, linking over cones, lack of Ambrosetti-Rabinowitz condition.

2010AMS Subject Classification: 35A15, 47J30, 35S15, 47G10, 45G05.

\section{Introduction}

In this paper we are concerned with the problem
\begin{equation}\label{plink}
\begin{cases}
(-\Delta)^s_p u =\lambda |u|^{p-2}u + g(x,u) & $ in $ \Omega,
\\
\mathscr{N}_{s,p}u=0  & $ in $ \R^N \setminus \overline{\Omega}.
\end{cases}
\end{equation}
Here $p\in(1,\infty)$, $\Omega$ is a bounded domain with Lipschitz boundary, $\lambda \geq 0$ and $g:\Omega\times \R \to \R$ is a Carath\'eodory function. The novelty of our investigation relies on the fact that we study a quasilinear fractional problem in presence of nonlocal {\em Neumann} boundary conditions, namely we require that
\[
\mathscr{N}_{s,p}u(x):=\int_\Omega \frac{|u(x)-u(y)|^{p-2}(u(x)-u(y))}{|x-y|^{N+ps}}dy=0\]
for every $x\in \R^N \setminus \overline{\Omega}$. As a matter of fact, such a condition is the natural $p-$Neumann boundary condition associated to the operator
\[
(-\Delta)^s_pu(x):=P.V.\int_{\R^N} \frac{|u(x)-u(y)|^{p-2}(u(x)-u(y))}{|x-y|^{N+ps}}dy
\]
for $x\in \Omega$, $P.V.$ being the Cauchy Principal value, see \cite{BMPS,DPROV,mupli} (see also \cite{mazon} for a related case and \cite{warma} for the restricted or regional fractional $p-$Laplacian. See also \cite{mbrs} for a general overlook on nonlocal operators).

Under suitable assumptions on $g$, we will show that problem \eqref{plink} admits solutions. As usual, we shall deal with weak solutions, belonging to a suitable function space. In our case, solutions will be sought in the space
\[
X:=\left\{u:\R^N\to \R  \quad \text{measurable such that }\|u\|<\infty\right\},
\]
where
\[
\|u\|:=\left(\int\int_\Q\frac{|u(x)-u(y)|^p}{|x-y|^{N+ps}}\,dxdy +\|u\|_{L^p(\Omega)}^p\right)^\frac{1}{p},
\]
and $\Q=\R^{2N}\setminus (C\Omega)^2$, $C\Omega=\R^N\setminus \Omega$.

\begin{rem}
It is clear that, when $\Omega$ is sufficiently regular, as in our case, in the integral above we can equally consider $\R^N\setminus \Omega$ or $\R^N\setminus \overline\Omega$.
\end{rem}

We will deal with the following standard
\begin{dfn}
Let $u\in X$. We say that $u$ is a weak solution of \eqref{plink} if
\[
\frac{1}{2}\int\int_\Q \frac{J_p(u(x)-u(y))(v(x)-v(y))}{|x-y|^{N+ps}}\, dxdy= \lambda\int_\Omega |u|^{p-2}uv\,dx+\int_\Omega g(x,u)v\,dx
\]
for every $v\in X$, where $J_p(u(x)-u(y))=|u(x)-u(y)|^{p-2}(u(x)-u(y)) $, provided that the last integral makes sense.
\end{dfn}
Of course, below we will give conditions which ensure that the definition above makes sense.

We observe that we shall consider only the case $\lambda\geq0$. Indeed, the case $\lambda<0$ makes the situation different, since one can apply the Mountain Pass Theorem with the Cerami or with the Palais-Smale condition (see \cite{mupli}). In our case the natural geometric structure for the associated functional is the one of {\em linking over cones}, as introduced in \cite{dela}, for which some suitable topological notions are needed. As usual when dealing with linking structures, it is natural to consider the eigenvalues of the underlying operator; in this case we will employ the sequence of eigenvalues found in \cite{mupli} by using the Fadell-Rabinowitz index. All these preliminary tools will be recalled in Section \ref{secback} below. We also recall that the use of linking theorems for fractional operators with {\em Dirichlet boundary conditions} has already appeared in related situations (see \cite{pp} and \cite{sv}).

As for the nonlinear source, in Section \ref{seclinking} we assume that $g$ has $p-$superlinear growth and satisfies different sets of assumptions: in the first case, we will assume that $g$ satisfies the usual Ambrosetti-Rabinowitz condition, while in the second case we will exploit a different general assumption, introduced in \cite{MP}. We remark that in both cases we encounter the difficulty of determining the topological structure of the associated functional, while in the second case we have the additional complication related to the proof of the Cerami condition. Finally, in Section \ref{secsaddle} we consider the case in which $g$ has $p-$linear growth.

As a matter of fact, there are two examples with $p=2$ that are covered by our results and which explain the nature of our results better:
\[
\begin{cases}
(-\Delta)^s u = \lambda u +|u|^{q-2}u\quad $ in $ \Omega,
\\
\mathscr{N}_{s,2}u=0  \quad \quad $ in $ \R^N \setminus \overline{\Omega},
\end{cases}
\]
with $q>2$ and $q<\frac{2N}{N-2s}$ if $N>2s$, and
\begin{equation}\label{es2}
\begin{cases}
(-\Delta)^s u = \lambda u +f(x)\quad $ in $ \Omega,
\\
\mathscr{N}_{s,2}u=0  \quad \quad $ in $ \R^N \setminus \overline{\Omega},
\end{cases}
\end{equation}
with $\lambda<0$ and $f\in L^2(\Omega)$. For the first problem the idea is to apply a standard Linking Theorem, while in the second case the variational structure is the one of the classical Weierstrass Theorem. In our results the first situation is widened to cover the quasilinear form  of the fractional $p-$Laplacian, which doesn't let us apply the classical Linking theorem directly, since the nonlinear operator $(-\Delta)^s_p$ does not have linear eigenspaces; thus, the use of Linking over cones provides an original opportunity, see \cite{dela}, \cite{liu}, \cite{28}, \cite{29} for related cases in the local situation.

. Moreover, the possibility of treating nonlinear terms non verifying the classical Abrosetti-Rabinowitz condition, makes our results new also in the easier case $p=2$. On the other hand, the easy situation described in problem \eqref{es2} is enlarged to cover quasilinear problems where  a nonlinear term is allowed to be not far from 0, as $\lambda$ is in \eqref{es2} (see Theorem \ref{thsad}).

\section{Background}\label{secback}
First we recall some notions regarding the eigenvalues of fractional $p-$Laplacian, see \cite{BMPS} and \cite{mupli}.
Consider the nonlinear eigenvalue problem
\begin{equation}\label{probla}
\begin{cases}
(-\Delta)^s_p u = \lambda |u|^{p-2}u \quad $ in $ \Omega,
\\
\mathscr{N}_{s,p}u=0  \quad \quad $ in $ \R^N \setminus \overline{\Omega},
\end{cases}
\end{equation}
with $\lambda \in \R$. As usual, if \eqref{probla} admits a weak solution we say that $\lambda$ is an eigenvalue 
of $(-\Delta)^s_p$ with $p-$Neumann boundary conditions. So, there exists a sequence $\lambda_m$ of eigenvalues defined as 
\begin{equation}\label{lm}
\begin{aligned}
\lambda_m:=\inf \left\lbrace \sup_{u\in A}\int\int_\Q \frac{|u(x)-u(y)|^p}{|x-y|^{N+ps}}\, dxdy
\right.&: \, A\subseteq M , A \text{ is symmetric,}\\
&\left. \text{ compact and } i(A) \geq m \right\rbrace,
\end{aligned}
\end{equation}
where $i$ is the $\mathbb{Z}_2$-cohomological index of Fadell and Rabinowitz (see \cite{fara}) and
\[
M:=\left\lbrace u\in X :\, \int_\Omega |u|^p\,dx=1 \right\rbrace.
\]
Notice that $\lambda_1=0$ is the first (simple) eigenvalue with associated eigenspace made of constant functions (see \cite{mupli}).

For each $\lambda_m$, we can define the cones
\begin{equation}\label{c-}
C_m^-:=\left\lbrace u\in X :\, \int\int_\Q \frac{|u(x)-u(y)|^p}{|x-y|^{N+ps}}\, dxdy \leq \lambda_m \int_\Omega |u|^p\,dx \right\rbrace
\end{equation}
\begin{equation}\label{c+}
C^+_m:=\left\lbrace u\in X :\, \int\int_\Q \frac{|u(x)-u(y)|^p}{|x-y|^{N+ps}}\, dxdy \geq \lambda_{m+1} \int_\Omega |u|^p\,dx \right\rbrace.
\end{equation}
For further use, we also introduce the notation
\[
[u]=\left(\int\int_\Q \frac{|u(x)-u(y)|^p}{|x-y|^{N+ps}}\, dxdy\right)^{1/p},
\]
which is closely related to the fractional Gagliardo seminorm.

Now we recall some notions on linking sets and Alexander-Spanier cohomology, referring to \cite{dela}.

\begin{dfn}
Let $D,S,A,B$ be four subsets of a metric space $X$ with $S \subseteq D$ and $B\subseteq A$. We say that $(D,S)$ \textit{links} $(A,B)$,
if $S\cap A= B\cap D =\emptyset$ and, for every deformation $\eta:D\times[0,1] \to X\setminus B$ with 
$\eta(S\times[0,1])\cap A=\emptyset$, we have that $\eta(D\times \{1\})\cap A\neq \emptyset$.
\end{dfn}

To prove the existence of critical points we will use  a particular case of 
\cite[Theorem 3.1]{fri}. A smooth version of such a result was already stated in \cite[Theorem 2.2]{dela} under the validity of the Palais--Smale condition. However, the key point in the proof of \cite[Theorem 3.1]{fri} is the possibility of defining deformations between sublevels, as it is possible under the validity of the Cerami condition. For this reason we recall that $f$ satisfies the $(C)_c$ condition, $c\in \R$, if
\begin{center}
for every $(u_n)_n$ such that
$f(u_n)\to c$ and $(1+\|u_n\|) f'(u_n) \to 0 $ in $X'$, then, up to a subsequence, $u_n \to u$ in $X$.
\end{center}
Hence, we will need the following version of \cite[Theorem 3.1]{fri}:
\begin{theorem}\label{critp}
Let $X$ be a complete Finsler manifold of class $C^1$ and let $f:X\to \R$ be a function of class $C^1$.
Let $D,S,A,B$ be four subsets of $X$, with $S \subseteq D$ and $B\subseteq A$, such that $(D,S)$ links $(A,B)$ 
and such that 
\[
\sup_S f< \inf_A f, \quad \quad \sup_D f< \inf_B f
\]
$($with $\sup \emptyset=-\infty$ and $\inf \emptyset=+\infty)$. Define 
\[
c=\inf_{\eta \in \mathcal{N}}\sup f(\eta(D\times \{1\})),
\]
where $\mathcal{N}$ is the set of deformations $\eta:D\times[0,1] \to X\setminus B$ with 
$\eta(S\times[0,1])\cap A=\emptyset$. Then we have 
\[
\inf_A f\leq c \leq \sup_D f.
\]
Moreover, if $f$ satisfies $(C)_c$, then $c$ is a critical value of $f$.
\end{theorem}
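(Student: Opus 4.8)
The plan is to deduce this from \cite[Theorem 3.1]{fri} essentially verbatim, the only difference being that here we assume the Cerami condition $(C)_c$ in place of the Palais–Smale condition used in the smooth formulation of \cite[Theorem 2.2]{dela}. First I would verify the two inequalities $\inf_A f \le c \le \sup_D f$. The upper bound $c \le \sup_D f$ is immediate: the identity deformation $\eta(u,t)=u$ belongs to $\mathcal N$ (since $(D,S)$ links $(A,B)$ forces $B\cap D=\emptyset$ and $S\cap A=\emptyset$, so $\eta(D\times[0,1])=D\subseteq X\setminus B$ and $\eta(S\times[0,1])\cap A=\emptyset$), and $\sup f(\eta(D\times\{1\}))=\sup_D f$. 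For the lower bound, I would use the linking property directly: for any $\eta\in\mathcal N$ we have $\eta(D\times\{1\})\cap A\neq\emptyset$, hence $\sup f(\eta(D\times\{1\}))\ge \inf_A f$; taking the infimum over $\eta\in\mathcal N$ gives $c\ge\inf_A f$. Note this step uses only the definition of linking and the structure of $\mathcal N$, not the functional's regularity; the hypotheses $\sup_S f<\inf_A f$ and $\sup_D f<\inf_B f$ are not needed here but will matter in the next step.

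Next I would establish that $c$ is a critical value when $(C)_c$ holds. Suppose, for contradiction, that $c$ is a regular value. The heart of the argument is a deformation lemma under the Cerami condition: there exist $\varepsilon>0$ and a deformation $\zeta:X\times[0,1]\to X$ such that $\zeta$ is the identity outside $f^{-1}([c-2\varepsilon,c+2\varepsilon])$, $\zeta(f^{c+\varepsilon},1)\subseteq f^{c-\varepsilon}$, and $\zeta$ fixes sublevels below $c-2\varepsilon$; here $f^a:=\{f\le a\}$. I would choose $\varepsilon$ small enough that, in addition, $c-2\varepsilon>\sup_D f$ is violated — more precisely, small enough that the strict inequalities $\sup_S f<\inf_A f$ and $\sup_D f<\inf_B f$ guarantee $S$, and a neighborhood of $B$, stay outside the band $f^{-1}([c-2\varepsilon,c+2\varepsilon])$ where $\zeta$ is active (this is where those hypotheses are used, exactly as in \cite{fri}). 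Pick $\eta\in\mathcal N$ with $\sup f(\eta(D\times\{1\}))<c+\varepsilon$, and set $\tilde\eta(u,t)=\zeta(\eta(u,2t),1)$ for $t\ge 1/2$, glued continuously to $\eta(u,2t)$ for $t\le 1/2$. One checks $\tilde\eta\in\mathcal N$: it still maps into $X\setminus B$ because near $B$ the map $\zeta$ acts trivially, and $\tilde\eta(S\times[0,1])\cap A=\emptyset$ because $\zeta$ does not move points of $\eta(S\times[0,1])$ toward $A$ (again using $\sup_S f<\inf_A f$). But then $\sup f(\tilde\eta(D\times\{1\}))\le c-\varepsilon<c$, contradicting the definition of $c$ as an infimum.

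The main obstacle is the construction of the Cerami deformation lemma with the additional requirement that it respect the linking data $(S,B)$ — that is, the technical bookkeeping needed to guarantee $\tilde\eta\in\mathcal N$. This is precisely the content of the argument in \cite[Theorem 3.1]{fri}, and I would not reproduce it in full; rather, I would observe that Cerami's pseudo-gradient construction (the flow is built using a locally Lipschitz pseudo-gradient vector field weighted by $1/(1+\|u\|)$, which is exactly what makes $(C)_c$ the natural condition, as noted in the excerpt) goes through unchanged once one knows, from the strict inequalities in the hypotheses, that the active band of the deformation is disjoint from $S$ and from a neighborhood of $B$. The convergence in $(C)_c$ replaces the Palais–Smale convergence used in \cite{dela} precisely at the point where one needs the "no critical values in $[c-2\varepsilon,c+2\varepsilon]\setminus\{c\}$ after shrinking $\varepsilon$" step, so no new idea beyond \cite{fri} is required; I would simply cite \cite[Theorem 3.1]{fri} for the deformation and present the linking/contradiction argument above as the specialization to this setting.
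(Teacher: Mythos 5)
Your proposal follows essentially the same route as the paper, which does not reprove this statement but presents it as a particular case of \cite[Theorem 3.1]{fri}, observing only that the sublevel deformations needed there are available under the Cerami condition; your direct verification of $\inf_A f\le c\le \sup_D f$ (identity deformation in $\mathcal{N}$ plus the linking property) together with your deferral of the Cerami deformation machinery to \cite{fri} matches that treatment. The one caution is that your informal justifications for the concatenated deformation remaining in $\mathcal{N}$ (that $\zeta$ ``acts trivially near $B$'' and ``does not move $\eta(S\times[0,1])$ toward $A$'') are not by themselves proofs---that bookkeeping is exactly the content of Frigon's argument---but since you explicitly delegate it to the cited result, as the paper does, the plan is sound.
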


\begin{dfn}
Let $D,S,A,B$ be four subsets of $X$ with $S \subseteq D$ and $B\subseteq A$; let $m$ be a nonnegative integer and let
$\mathbb{K}$ be a field. We say that $(D,S)$ \textit{links} $(A,B)$ \textit{cohomologically in dimension $m$ over} $\mathbb{K}$
if $S\cap A= B\cap D =\emptyset$ and the restriction homomorphism 
$H^m(X\setminus B,X\setminus A;\mathbb{K})\to H^m(D,S;\mathbb{K})$ is not identically zero.
\end{dfn}

The geometry we are interested in is described by the following
\begin{theorem}[\cite{dela}, Theorem 2.8] \label{geom}
Let $X$ be a real normed space and let ${\mathcal C}_-$, ${\mathcal C}_+$ be two cones such that ${\mathcal C}_+$ is closed in $X$, 
${\mathcal C}_- \cap {\mathcal C}_+=\lbrace 0 \rbrace$ and such that $(X,{\mathcal C}_-\setminus \lbrace 0 \rbrace)$ links ${\mathcal C}_+$ cohomologically in dimension
$m$ over $\mathbb{K}$. Let $r_-,r_+>0$ and let
\[
D_-=\lbrace u\in {\mathcal C}_-:\,\|u\|\leq r_-\rbrace, \quad \quad S_-=\lbrace u\in {\mathcal C}_-:\,\|u\|= r_-\rbrace, 
\]
\[
D_+=\lbrace u\in {\mathcal C}_+:\,\|u\|\leq r_+\rbrace, \quad \quad S_+=\lbrace u\in {\mathcal C}_+:\,\|u\|= r_+\rbrace.
\]
Then the following facts hold:
\begin{itemize}
\item[(a)] $(D_-,S_-)$ links ${\mathcal C}_+$ cohomologically in dimension $m$ over $\mathbb{K}$;

\item[(b)]  $(D_-,S_-)$ links $(D_+,S_+)$ cohomologically in dimension $m$ over $\mathbb{K}$;
\end{itemize}

Moreover, let $e\in X$ with $-e\notin {\mathcal C}_-$, let
\[
Q=\lbrace u+te:\, u\in {\mathcal C}_-,\, t\geq 0,\, \|u+te\|\leq r_- \rbrace,
\]
\[
H=\lbrace u+te:\, u\in {\mathcal C}_-,\, t\geq 0,\, \|u+te\|= r_- \rbrace,
\]
and assume that $r_->r_+$. Then the following facts hold:
\begin{itemize}
\item[(c)] $(Q,D_-\cup H)$ links $S_+$ cohomologically in dimension $m+1$ over $\mathbb{K}$;

\item[(d)]  $D_-\cup H$ links $(D_+,S_+)$ cohomologically in dimension $m$ over $\mathbb{K}$;
\end{itemize}
\end{theorem}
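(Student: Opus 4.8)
The plan is to deduce all four conclusions from the single hypothesis that the restriction homomorphism $H^m(X,X\setminus\mathcal{C}_+;\mathbb{K})\to H^m(X,\mathcal{C}_-\setminus\{0\};\mathbb{K})$ is nonzero, using only naturality of Alexander--Spanier cohomology together with the radial deformations available because $\mathcal{C}_-$ and $\mathcal{C}_+$ are cones. Two elementary observations will be used throughout: first, for a cone $\mathcal{C}$ and $r>0$, the homotopy $(u,t)\mapsto\bigl((1-t)+t\,r/\|u\|\bigr)u$ is a strong deformation retraction of $\mathcal{C}\setminus\{0\}$ onto $\{u\in\mathcal{C}:\|u\|=r\}$, and it stays inside $\overline B_\rho$ for every $\rho\ge r$; second, each of the sets $\mathcal{C}\cap\overline B_r$, $D_\pm$, $Q$ and $H$ is contractible --- being star-shaped with respect to $0$, while $H$, a radial slice of the cone $\R_{\ge 0}e+\mathcal{C}_-$, is contractible because $(\R_{\ge 0}e+\mathcal{C}_-)\setminus\{0\}$ contracts onto $e$ (here one uses $-e\notin\mathcal{C}_-$). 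All the disjointness requirements in the definition of cohomological linking follow at once from $\mathcal{C}_-\cap\mathcal{C}_+=\{0\}$, from $0\notin S_\pm\cup H$, and from $r_->r_+$ (so $S_+$ and $H$ have distinct norms, hence are disjoint).

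For (a), the inclusion $(D_-,S_-)\hookrightarrow(X,\mathcal{C}_-\setminus\{0\})$ is a homotopy equivalence of pairs, since $D_-\hookrightarrow X$ is a homotopy equivalence (both spaces contractible) and $S_-\hookrightarrow\mathcal{C}_-\setminus\{0\}$ is one by the radial retraction; hence the five lemma applied to the two long exact sequences gives an isomorphism $H^m(X,\mathcal{C}_-\setminus\{0\};\mathbb{K})\xrightarrow{\sim}H^m(D_-,S_-;\mathbb{K})$, and composing with the hypothesis shows $H^m(X,X\setminus\mathcal{C}_+;\mathbb{K})\to H^m(D_-,S_-;\mathbb{K})$ is nonzero, which is (a). For (b) I would compare $(X\setminus S_+,X\setminus D_+)$ with $(X,X\setminus\mathcal{C}_+)$ through the zig--zag of restriction maps $H^m(X,X\setminus\mathcal{C}_+)\to H^m(X\setminus S_+,X\setminus\mathcal{C}_+)\leftarrow H^m(X\setminus S_+,X\setminus D_+)$ induced by the obvious inclusions; by the long exact sequences of the triples $(X,X\setminus S_+,X\setminus\mathcal{C}_+)$ and $(X\setminus S_+,X\setminus D_+,X\setminus\mathcal{C}_+)$, both arrows are isomorphisms as soon as one knows that $X\setminus S_+\hookrightarrow X$ and $X\setminus\mathcal{C}_+\hookrightarrow X\setminus D_+$ induce isomorphisms in cohomology. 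Since $(D_-,S_-)$ includes compatibly into all three pairs, the resulting isomorphism $H^m(X,X\setminus\mathcal{C}_+;\mathbb{K})\cong H^m(X\setminus S_+,X\setminus D_+;\mathbb{K})$ carries the nonzero map of (a) onto the one required by (b).

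For (c) and (d) the extra vector $e$ supplies one dimension. On the model side, $Q=\overline B_{r_-}\cap(\R_{\ge 0}e+\mathcal{C}_-)$ and $D_-\cup H$ is its outer boundary; collapsing the contractible set $H$ identifies $(D_-\cup H,H)$ with $(D_-,S_-)$ by excision, so the connecting homomorphism of the triple $(Q,D_-\cup H,H)$ --- an isomorphism because $Q$ and $H$ are contractible --- gives $H^m(D_-,S_-;\mathbb{K})\xrightarrow{\sim}H^{m+1}(Q,D_-\cup H;\mathbb{K})$. On the obstacle side, the connecting homomorphism of the triple $(X,X\setminus S_+,X\setminus D_+)$ produces the analogous degree shift relating $H^m(X\setminus S_+,X\setminus D_+;\mathbb{K})$ to $H^{m+1}(X,X\setminus S_+;\mathbb{K})$. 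The inclusion of triples $(Q,D_-\cup H,H)\hookrightarrow(X,X\setminus S_+,X\setminus D_+)$ makes these connecting homomorphisms natural, so inserting the two shifts together with the nonzero map of (b) into the resulting commuting square forces the restriction $H^{m+1}(X,X\setminus S_+;\mathbb{K})\to H^{m+1}(Q,D_-\cup H;\mathbb{K})$ to be nonzero, which is (c). Finally, (d) follows from (b): from $D_-\subseteq D_-\cup H$ and $S_-\subseteq H$ one gets the excision equivalence $(D_-,S_-)\hookrightarrow(D_-\cup H,H)$, and composing with the isomorphism $H^m(D_-\cup H,H;\mathbb{K})\cong H^m(D_-\cup H;\mathbb{K})$ (valid since $H$ is contractible) rewrites the nonzero map of (b) as the required nonzero restriction $H^m(X\setminus S_+,X\setminus D_+;\mathbb{K})\to H^m(D_-\cup H;\mathbb{K})$.

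The main obstacle is not the diagram chasing but the auxiliary cohomology computations for complements of cones: one must check that $X\setminus S_+\hookrightarrow X$, $X\setminus D_+\hookrightarrow X$ and $X\setminus\mathcal{C}_+\hookrightarrow X\setminus D_+$ all induce isomorphisms in Alexander--Spanier cohomology, and that the excisions identifying $(D_-\cup H,H)$ with $(D_-,S_-)$ are legitimate. For arbitrary closed cones these subsets need not be ANRs, so the standard manifold or CW arguments are unavailable and the isomorphisms must be produced explicitly from the radial structure --- pushing the part of $\mathcal{C}_+$ lying outside $D_+$ away from the ``cap'' and using the transverse directions to slide loops and spheres off $S_+$ --- which is exactly where the bulk of the technical work lies, and the reason Alexander--Spanier cohomology, with its strong excision and tautness for closed subsets of metric spaces, is the convenient theory here.
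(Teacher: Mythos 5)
The paper offers no proof of this statement at all: it is imported verbatim from Degiovanni--Lancellotti (\cite{dela}, Theorem 2.8), so your argument can only be judged on its own merits, and there it has a genuine gap. Part (a) is fine (radial retraction of $\mathcal{C}_-\setminus\{0\}$ onto $S_-$, contractibility of $D_-$ and $X$, five lemma, functoriality of restrictions), but the reduction you propose for (b) and (c) rests on auxiliary claims that are not merely unproved technical work --- they are false. You need $X\setminus S_+\hookrightarrow X$, $X\setminus D_+\hookrightarrow X$ and $X\setminus\mathcal{C}_+\hookrightarrow X\setminus D_+$ to induce isomorphisms in Alexander--Spanier cohomology. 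Already in the simplest case where the hypothesis holds, $X=\R^2$ with $\mathcal{C}_+$ the horizontal axis and $\mathcal{C}_-$ the vertical axis (so $m=1$), all three fail: $X\setminus S_+$ is the plane minus two points and $X\setminus D_+$ the plane minus a closed segment, both with nontrivial $H^1$, while $X\setminus\mathcal{C}_+$ has two components and $X\setminus D_+$ is connected. The same happens in the intended infinite-dimensional application: there $i(X\setminus\mathcal{C}_+)=m$, so $X\setminus\mathcal{C}_+$ carries nontrivial cohomology, which the complement of the bounded set $D_+$ cannot reproduce, so at least $X\setminus\mathcal{C}_+\hookrightarrow X\setminus D_+$ cannot be a cohomology equivalence there either; no ``pushing the cone off the cap'' deformation exists.

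Worse, the vanishing you invoke is incompatible with the theorem itself: if $X\setminus S_+\hookrightarrow X$ induced cohomology isomorphisms, the long exact sequence of the pair would give $H^{m+1}(X,X\setminus S_+;\mathbb{K})=0$, and then the restriction homomorphism $H^{m+1}(X,X\setminus S_+;\mathbb{K})\to H^{m+1}(Q,D_-\cup H;\mathbb{K})$, whose nonvanishing is exactly assertion (c), would be identically zero. So the very isomorphism that makes your zig-zag for (b) work would make (c) false; the real content of (b)--(d) is to compare $H^m(X\setminus S_+,X\setminus D_+;\mathbb{K})$ and $H^{m+1}(X,X\setminus S_+;\mathbb{K})$ with $H^m(X,X\setminus\mathcal{C}_+;\mathbb{K})$ precisely \emph{without} any cohomological triviality of these complements, which is where the proof in \cite{dela} does its genuine work. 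Two further points need care even in the salvageable parts: $\mathcal{C}_-$ is not assumed convex or closed and the decomposition $u+te$ is not unique, so the contractibility of $H$ (used for (c) and for passing from $H^m(D_-\cup H,H;\mathbb{K})$ to $H^m(D_-\cup H;\mathbb{K})$ in (d)) requires an actual argument rather than the sketch given; granted that, your derivation of (d) from (b) through the factorization $(D_-,S_-)\subseteq(D_-\cup H,H)\subseteq(X\setminus S_+,X\setminus D_+)$ is the one step beyond (a) that stands.
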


In order to prove our existence result, we shall use assertion (c) in Section \ref{seclinking} and assertion (a) in Section \ref{secsaddle}, that correspond to the classical linking and saddle geometry, respectively.

We will also take advantage of the following result
\begin{cor}[\cite{dela}, Corollary 2.9]\label{corgeom}
Let $X$ be a real normed space and let ${\mathcal C}_-,{\mathcal C}_+$ be two symmetric cones in $X$ such that ${\mathcal C}_+$ is closed in $X$, 
${\mathcal C}_-\cap {\mathcal C}_+ = \lbrace 0 \rbrace$ and such that 
\[
i({\mathcal C}_-\setminus \lbrace0\rbrace)= i(X\setminus {\mathcal C}_+)<\infty.
\]
Then the assertion (a)-(d) of Theorem $\ref{geom}$ hold for $m=i({\mathcal C}_-\setminus \lbrace0\rbrace)$ and $\mathbb{K}=\mathbb{Z}_2$.
\end{cor}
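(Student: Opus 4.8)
The plan is to derive Corollary \ref{corgeom} directly from Theorem \ref{geom}. I would set $\mathbb{K}=\mathbb{Z}_2$ and $m=i({\mathcal C}_-\setminus\{0\})$, which is finite by hypothesis and equals $i(X\setminus{\mathcal C}_+)$. Once we know that $(X,{\mathcal C}_-\setminus\{0\})$ links ${\mathcal C}_+$ cohomologically in dimension $m$ over $\mathbb{Z}_2$, all four assertions (a)--(d) are delivered word for word by Theorem \ref{geom}, since the remaining hypotheses there (${\mathcal C}_+$ closed, ${\mathcal C}_-\cap{\mathcal C}_+=\{0\}$) are assumed. So the argument collapses to verifying that single cohomological linking statement.

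Unwinding the definition, this amounts to two things. The disjointness conditions are immediate: since ${\mathcal C}_-$ and ${\mathcal C}_+$ are cones meeting only at $0$, we have $({\mathcal C}_-\setminus\{0\})\cap{\mathcal C}_+=\emptyset$ (and $\emptyset\cap X=\emptyset$ trivially), so in particular ${\mathcal C}_-\setminus\{0\}\subseteq X\setminus{\mathcal C}_+$. The substantive point is then to show that the restriction homomorphism $H^m(X,X\setminus{\mathcal C}_+;\mathbb{Z}_2)\to H^m(X,{\mathcal C}_-\setminus\{0\};\mathbb{Z}_2)$, induced by the inclusion of pairs $(X,{\mathcal C}_-\setminus\{0\})\hookrightarrow(X,X\setminus{\mathcal C}_+)$, is not identically zero. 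Here I would use that both ${\mathcal C}_-\setminus\{0\}$ and $X\setminus{\mathcal C}_+$ are symmetric and carry the free antipodal $\mathbb{Z}_2$-action, so the Fadell--Rabinowitz index is available on them; the inclusion ${\mathcal C}_-\setminus\{0\}\hookrightarrow X\setminus{\mathcal C}_+$ is odd, hence monotonicity of the index reproves $i({\mathcal C}_-\setminus\{0\})\le i(X\setminus{\mathcal C}_+)$, an equality $=m$ by assumption. Combining this equality with the characterization of the index through the powers of the generator of $H^*(B\mathbb{Z}_2;\mathbb{Z}_2)$ pulled back by a classifying map, and with the long exact sequences of the pairs $(X,X\setminus{\mathcal C}_+)$ and $(X,{\mathcal C}_-\setminus\{0\})$ — which simplify since $X$ is contractible, giving $H^k(X,Y;\mathbb{Z}_2)\cong\tilde H^{k-1}(Y;\mathbb{Z}_2)$ — one checks that the relevant degree-$m$ class is nonzero on both sides and is preserved by the restriction map; if instead that map vanished in degree $m$, the same diagram would force the strict inequality $i({\mathcal C}_-\setminus\{0\})<m$, contradicting $i({\mathcal C}_-\setminus\{0\})=m$. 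This yields the cohomological linking, and Theorem \ref{geom} then finishes the proof.

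I expect the main obstacle to be precisely this last translation — converting the numerical equality $i({\mathcal C}_-\setminus\{0\})=i(X\setminus{\mathcal C}_+)$ into the non-triviality of the restriction homomorphism — which is where the continuity, monotonicity and dimension properties of the $\mathbb{Z}_2$-cohomological index, together with Gysin/long-exact-sequence bookkeeping for the associated double covers $Z\to Z/\mathbb{Z}_2$, actually do the work; everything else (the emptiness conditions, transferring the conclusions through Theorem \ref{geom}) is routine. Since this statement is exactly \cite[Corollary 2.9]{dela}, one may alternatively simply invoke that reference.
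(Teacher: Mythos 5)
The paper does not prove this corollary at all: it is quoted verbatim from \cite[Corollary 2.9]{dela}, so your closing remark --- that one may simply invoke that reference --- is exactly the paper's own treatment, and as far as this manuscript is concerned that is all that is required. Your sketch of what lies behind the citation is also the right outline: the only nontrivial hypothesis of Theorem \ref{geom} to be verified is that $(X,{\mathcal C}_-\setminus\{0\})$ links ${\mathcal C}_+$ cohomologically in dimension $m$ over $\mathbb{Z}_2$, the disjointness conditions being immediate from ${\mathcal C}_-\cap{\mathcal C}_+=\{0\}$, and the reduction via contractibility of $X$ to the nonvanishing of the restriction $\tilde H^{m-1}(X\setminus{\mathcal C}_+;\mathbb{Z}_2)\to\tilde H^{m-1}({\mathcal C}_-\setminus\{0\};\mathbb{Z}_2)$ is correct. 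The one place where your argument is only gestured at is the phrase ``the relevant degree-$m$ class is nonzero on both sides and is preserved by the restriction map'': the classes detected by the Fadell--Rabinowitz index are the powers $\omega^{m-1}$ in the cohomology of the \emph{quotients} of the two free $\mathbb{Z}_2$-sets, while the linking homomorphism acts on ordinary Alexander--Spanier groups, where there is no distinguished nonzero class to preserve; it has to be manufactured. The mechanism (which is what the Gysin/transfer ``bookkeeping'' you allude to actually does) is: $i(X\setminus{\mathcal C}_+)=m$ gives $\omega^m=0$ on $(X\setminus{\mathcal C}_+)/\mathbb{Z}_2$, so by exactness of the Gysin sequence of the double cover one writes $\omega^{m-1}=\pi_!\beta$ for some $\beta\in H^{m-1}(X\setminus{\mathcal C}_+;\mathbb{Z}_2)$; by naturality of the transfer under the equivariant inclusion, the restriction of $\beta$ to ${\mathcal C}_-\setminus\{0\}$ has transfer equal to $\omega^{m-1}$ there, which is nonzero precisely because $i({\mathcal C}_-\setminus\{0\})=m$; hence that restriction is nonzero and the linking homomorphism is not identically zero (this also substantiates your contrapositive: if the restriction vanished identically one would get $\omega^{m-1}=0$ on the quotient of ${\mathcal C}_-\setminus\{0\}$, i.e.\ $i({\mathcal C}_-\setminus\{0\})<m$). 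Note the equality of indices is thus used in two opposite directions --- as an upper bound on $i(X\setminus{\mathcal C}_+)$ to lift through the transfer, and as a lower bound on $i({\mathcal C}_-\setminus\{0\})$ to detect --- rather than as a single nonzero class sitting on both sides. With that step made precise, or simply delegated to \cite{dela} as the paper does, your proposal is fine.
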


Going back to definitions \eqref{c-} and \eqref{c+}, we have the following result, which is the transcription in our setting of \cite[Theorem 3.2]{dela}, and whose proof follows that one step-by-step.
\begin{theorem}\label{index}
Let $m\geq 1$ be such that $\lambda_m < \lambda_{m+1}$, then we have
\[
i(C_m^- \setminus \lbrace 0 \rbrace) =i(X\setminus C_m^+)=m
\]
\end{theorem}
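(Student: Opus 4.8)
The plan is to reproduce the proof of \cite[Theorem 3.2]{dela} step by step, the only formal change being that the Dirichlet Gagliardo seminorm there is replaced by $[\,\cdot\,]$ and the full $L^p$-norm by $\|\cdot\|_{L^p(\Omega)}$. Throughout, write $\Psi(u):=[u]^p$, so that by \eqref{lm} the number $\lambda_m$ is the $m$-th minimax level of $\Psi$ over $M$ relative to the index $i$; recall also that, because of the compact embedding $X\hookrightarrow L^p(\Omega)$ and the $(S_+)$-property of $(-\Delta)^s_p$, the restriction of $\Psi$ to $M$ satisfies the Palais--Smale condition (see \cite{mupli,BMPS}).

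First I would reduce the computation to the sphere $M$. Since the inequalities in \eqref{c-} and \eqref{c+} are positively $p$-homogeneous, both $C_m^-\setminus\{0\}$ and $X\setminus C_m^+$ are pointed cones; on $C_m^-\setminus\{0\}$ one has $\|u\|_{L^p(\Omega)}>0$ (otherwise $[u]^p\le 0$, forcing $u=0$), and likewise $\|u\|_{L^p(\Omega)}>0$ on $X\setminus C_m^+$ (the strict inequality $[u]^p<\lambda_{m+1}\|u\|_{L^p(\Omega)}^p$ cannot hold when $\|u\|_{L^p(\Omega)}=0$). Hence $(u,t)\mapsto(1-t)u+t\,u/\|u\|_{L^p(\Omega)}$ is a well-defined, continuous, odd deformation retraction of $C_m^-\setminus\{0\}$ onto $\{u\in M:\Psi(u)\le\lambda_m\}$ and of $X\setminus C_m^+$ onto $\{u\in M:\Psi(u)<\lambda_{m+1}\}$. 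By homotopy invariance and monotonicity of $i$ it is therefore enough to show that these two subsets of $M$ both have index $m$.

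For the lower bounds I would argue as in the standard proof that $\lambda_m$ is a critical level of $\Psi|_M$. By \eqref{lm}, for every $\ve>0$ there is a symmetric compact set $A\subseteq M$ with $i(A)\ge m$ and $A\subseteq\{\Psi\le\lambda_m+\ve\}$; pushing $A$ downwards by a negative odd pseudo-gradient flow and invoking the Palais--Smale condition together with the continuity property of $i$ (see \cite{fara}), one produces a symmetric compact subset of $\{u\in M:\Psi(u)\le\lambda_m\}$ still of index $\ge m$, so that $i(\{u\in M:\Psi(u)\le\lambda_m\})\ge m$, i.e.\ $i(C_m^-\setminus\{0\})\ge m$. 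Next, since $\lambda_m<\lambda_{m+1}$, every $u\neq 0$ with $[u]^p\le\lambda_m\|u\|_{L^p(\Omega)}^p$ satisfies $[u]^p<\lambda_{m+1}\|u\|_{L^p(\Omega)}^p$, whence $C_m^-\setminus\{0\}\subseteq X\setminus C_m^+$ and, by monotonicity, $i(X\setminus C_m^+)\ge m$.

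It then remains to prove that $i(X\setminus C_m^+)\le m$, for the last inclusion will immediately give $i(C_m^-\setminus\{0\})\le m$ as well, and the theorem follows. I would do this by contradiction: if $i(\{u\in M:\Psi(u)<\lambda_{m+1}\})\ge m+1$, then, using the Palais--Smale condition to deformation retract $\{u\in M:\Psi(u)<\lambda_{m+1}\}$ onto $\{u\in M:\Psi(u)\le\lambda_{m+1}-\ve\}$ for a small $\ve>0$ and the continuity property of $i$ to extract a \emph{symmetric compact} set $A$ inside this sublevel with $i(A)\ge m+1$, one would obtain $\sup_A\Psi\le\lambda_{m+1}-\ve<\lambda_{m+1}$, contradicting the definition \eqref{lm} of $\lambda_{m+1}$. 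The step I expect to be the main obstacle is exactly this last one: turning the non-compact, merely open sublevel $\{\Psi<\lambda_{m+1}\}$ into a symmetric \emph{compact} set of the same index requires a careful simultaneous use of the Palais--Smale deformation and of the continuity property of the Fadell--Rabinowitz index, performed precisely as in \cite{dela}; everything else reduces to homogeneity bookkeeping and to the index properties recalled in Section~\ref{secback}.
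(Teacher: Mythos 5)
Your overall plan coincides with the paper's: the paper offers no argument of its own and simply declares that the proof of \cite[Theorem 3.2]{dela} transcribes step by step, and your reduction to the sphere $M$ by the odd radial retraction, the inclusion $C_m^-\setminus\{0\}\subseteq X\setminus C_m^+$ and the use of monotonicity are all as they should be. The difficulty is in how you carry out the two key estimates. For the lower bound you claim that a compact symmetric $A\subseteq\{u\in M:\Psi(u)\le\lambda_m+\ve\}$ with $i(A)\ge m$ can be pushed by a negative odd pseudo-gradient flow into $\{\Psi\le\lambda_m\}$, producing a compact symmetric subset of that closed sublevel with index $\ge m$. This would require the interval $(\lambda_m,\lambda_m+\ve]$ to be free of critical values of $\Psi|_M$ for some $\ve>0$: trajectories may converge to critical points (eigenfunctions) at intermediate levels, and since the variational eigenvalues \eqref{lm} are not known to exhaust the spectrum, such intermediate critical values cannot be excluded. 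Note moreover that what you want to produce is equivalent to the attainment of the infimum in \eqref{lm}, which is not known. The same objection hits your upper bound, where you propose to deformation retract $\{\Psi<\lambda_{m+1}\}$ onto $\{\Psi\le\lambda_{m+1}-\ve\}$: this presupposes that $[\lambda_{m+1}-\ve,\lambda_{m+1})$ contains no critical values.

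The argument of \cite{dela}, which the paper adopts verbatim, never deforms into a sublevel set. For the upper bound no deformation is needed: one uses the property of the cohomological index that on an open symmetric set (here $\{u\in M:\Psi(u)<\lambda_{m+1}\}$, open in the $C^1$ manifold $M$) the index is realized on compact symmetric subsets; if the index were $\ge m+1$, a compact symmetric $K$ with $i(K)\ge m+1$ and $\max_K\Psi<\lambda_{m+1}$ would contradict \eqref{lm} directly. This is a different property from the ``continuity'' property you invoke, which concerns neighborhoods of closed sets. For the lower bound one argues by contradiction: if $i(\{\Psi\le\lambda_m\})\le m-1$, the continuity (tautness) property of the index yields an open symmetric neighborhood $U$ of this closed sublevel with $i(U)\le m-1$; the Palais--Smale condition then shows that, for $\ve$ small, every critical point with value in $(\lambda_m,\lambda_m+\ve]$ lies in $U$ (otherwise such points would form a Palais--Smale sequence accumulating on the critical set at level $\lambda_m$, which is contained in $U$); finally the odd deformation lemma avoiding a neighborhood of the critical set pushes $A$ into $\{\Psi\le\lambda_m-\delta\}\cup U\subseteq U$, and monotonicity gives $m\le i(A)\le i(U)\le m-1$, a contradiction. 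In short, your sketch points at the right toolbox but assembles it so that the deformation is required to land in the sublevel set itself, which cannot be guaranteed; it must instead land in the neighborhood furnished by the continuity property, and the compact-realization property of the index, not a deformation, is what handles the set $X\setminus C_m^+$.
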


Finally, in order to use Theorem \ref{critp}, the crucial tool is
\begin{prop}[\cite{dela}, Proposition 2.4]\label{collegamento}
If $(D, S)$ links $(A, B)$ cohomologically (in some dimension), then $(D, S)$ links $(A, B)$.
\end{prop}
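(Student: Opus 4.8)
The plan is to argue by contradiction, using only homotopy invariance of the cohomology functor together with the elementary fact that $H^{m}(Y,Y;\mathbb{K})=0$ for every space $Y$ and every $m$. Assume $(D,S)$ links $(A,B)$ cohomologically in some dimension $m$ over $\mathbb{K}$, but suppose it does not link $(A,B)$ in the sense of the first definition. The emptiness requirement $S\cap A=B\cap D=\emptyset$ is already part of the cohomological hypothesis, so the failure must occur in the deformation clause: there is a deformation $\eta\colon D\times[0,1]\to X\setminus B$ (in particular $\eta(\cdot,0)=\mathrm{id}_{D}$) with $\eta(S\times[0,1])\cap A=\emptyset$ and, this being the crucial point, $\eta(D\times\{1\})\cap A=\emptyset$.

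The next step is to read $\eta$ as a homotopy of maps of pairs. Since $B\subseteq A$ we have $X\setminus A\subseteq X\setminus B$, and from $D\cap B=\emptyset$, $S\cap A=\emptyset$ the inclusion $j\colon(D,S)\hookrightarrow(X\setminus B,X\setminus A)$ is well defined; it is precisely the map inducing the restriction homomorphism $j^{*}$ in the definition of cohomological linking, so $j^{*}$ is not identically zero on $H^{m}(X\setminus B,X\setminus A;\mathbb{K})$. For each $t\in[0,1]$ the slice $\eta_{t}:=\eta(\cdot,t)$ sends $D$ into $X\setminus B$ and, by $\eta(S\times[0,1])\cap A=\emptyset$, sends $S$ into $X\setminus A$; hence each $\eta_{t}$ is a map of pairs $(D,S)\to(X\setminus B,X\setminus A)$, with $\eta_{0}=j$, and $\eta$ is a homotopy through such maps. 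Homotopy invariance then gives $j^{*}=\eta_{1}^{*}$ on $H^{m}(X\setminus B,X\setminus A;\mathbb{K})$.

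Finally I would exploit the extra information $\eta(D\times\{1\})\cap A=\emptyset$, i.e.\ $\eta_{1}(D)\subseteq X\setminus A$. This lets $\eta_{1}$ factor as $(D,S)\xrightarrow{\,g\,}(X\setminus A,X\setminus A)\xrightarrow{\,\iota\,}(X\setminus B,X\setminus A)$, where $g$ is $\eta_{1}$ with codomain restricted to $X\setminus A$ and $\iota$ is the inclusion of pairs. By functoriality $\eta_{1}^{*}=g^{*}\circ\iota^{*}$, while $\iota^{*}$ factors through $H^{m}(X\setminus A,X\setminus A;\mathbb{K})=0$, so $\iota^{*}=0$ and hence $\eta_{1}^{*}=0$. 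Combined with the previous step this yields $j^{*}=0$, contradicting the cohomological linking hypothesis. Therefore $(D,S)$ links $(A,B)$.

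I do not expect a genuine obstacle here: the content is purely the bookkeeping of pairs of spaces. The only points deserving care are checking that the homotopy $\eta$ indeed stays inside the pair $(X\setminus B,X\setminus A)$ for every time $t$ — this is exactly where the hypothesis $\eta(S\times[0,1])\cap A=\emptyset$ enters — and observing that nothing is used beyond homotopy invariance and the vanishing $H^{*}(Y,Y;\mathbb{K})=0$, so the argument applies verbatim to Alexander--Spanier cohomology, as required in our setting.
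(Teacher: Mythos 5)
Your argument is correct and complete: the contradiction via homotopy invariance of Alexander--Spanier cohomology for maps of pairs, together with the factorization of $\eta_1^*$ through $H^m(X\setminus A,X\setminus A;\mathbb{K})=0$, is exactly the standard proof of this fact. The paper itself does not reprove the proposition but cites it from Degiovanni--Lancellotti, where the argument is essentially the one you give, so no further comparison is needed.
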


\section{Linking-like problems}\label{seclinking}
Now, let us go back to problem \eqref{plink}, that is
\[
\begin{cases}
(-\Delta)^s_p u =\lambda |u|^{p-2}u + g(x,u) & $ in $ \Omega,
\\
\mathscr{N}_{s,p}u=0  & $ in $ \R^N \setminus \overline{\Omega}.
\end{cases}
\]
We recall that $p\in(1,\infty)$, $\Omega$ is a bounded domain with Lipschitz boundary, $\lambda \geq 0$ and $g:\Omega\times \R \to \R$ is a Carath\'eodory function, that is the map $x\mapsto g(x,t)$ is measurable for every $t\in \R$ and the map $t\mapsto g(x,t)$ is continuous for a.e. $x\in \Omega$. 

Of course, we shall assume growth conditions on $g$ which will ensure that any critical point of the $C^1$ functional $I:X\to \R$ defined as
\begin{equation}\label{I}
I(u)=\frac{1}{2p}\int\int_\Q\frac{|u(x)-u(y)|^p}{|x-y|^{N+ps}}\, dxdy-\frac{\lambda}{p}\int_\Omega |u|^p\,dx 
-\int_\Omega G(x,u)\,dx
\end{equation}
is a weak solution of \eqref{plink}.

\begin{rem}
Notice that, quite strangely, the coefficient $\frac{1}{2}$ appears in front of the expected $\frac{1}{p}[u]^p$. This is related to symmetry properties of the double integral in the definition of $I$, and it justifies the fact that $u$ solves \eqref{plink} if and only if $I'(u)=0$, see \cite{BMPS,mupli}.
\end{rem}

We first we give the following result, which will be useful in any case and which makes precise the statement in \cite{mupli} related to the $(S)$ property.
\begin{prop}\label{s+}
Set $A(u)=[u]^p$.
Then the functional $A':X\to X'$  
satisfies the $(S)_+$ property, that is for every sequence $(u_n)_n$ such that $u_n \rightharpoonup u$ in $X$  as $n\to \infty$ and
\begin{equation}\label{lsup}
\limsup_{n\to \infty}\langle A'(u_n),u_n-u\rangle_{X',X} \leq 0,
\end{equation}
then $u_n \to u$ in $X$ as $n\to \infty$.
\end{prop}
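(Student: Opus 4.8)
The plan is to exploit the strict monotonicity of the fractional $p$-Laplacian operator together with weak convergence. First I would compute $\langle A'(u_n), u_n - u\rangle$ explicitly: by definition of $A(u) = [u]^p$ one has
\[
\langle A'(u_n), v\rangle = p \iint_{\Q} \frac{J_p(u_n(x) - u_n(y))(v(x) - v(y))}{|x-y|^{N+ps}}\,dx\,dy.
\]
The key algebraic fact is the well-known inequality for the map $J_p$: for all $a, b \in \R$,
\[
\bigl(J_p(a) - J_p(b)\bigr)(a - b) \geq 0,
\]
with equality only if $a = b$. Applying this with $a = u_n(x) - u_n(y)$ and $b = u(x) - u(y)$, and writing $d\mu = |x-y|^{-(N+ps)}\,dx\,dy$, one gets that
\[
\langle A'(u_n) - A'(u), u_n - u\rangle = p \iint_{\Q} \bigl(J_p(u_n(x)-u_n(y)) - J_p(u(x)-u(y))\bigr)\bigl((u_n-u)(x) - (u_n-u)(y)\bigr)\,d\mu \geq 0.
\]

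Next I would use the hypothesis. Since $u_n \rightharpoonup u$ in $X$ and $A'(u) \in X'$ is a fixed functional, $\langle A'(u), u_n - u\rangle \to 0$. Combining this with \eqref{lsup} gives
\[
\limsup_{n\to\infty} \langle A'(u_n) - A'(u), u_n - u\rangle \leq 0,
\]
and since each term is nonnegative, the full limit is $0$. This forces the integrand above (a nonnegative quantity integrated against $d\mu$) to go to $0$ in $L^1(\Q, d\mu)$. To pass from this to norm convergence I would distinguish the cases $p \geq 2$ and $1 < p < 2$, using the standard quantitative refinements of the monotonicity inequality: for $p \geq 2$ there is $c_p > 0$ with $(J_p(a)-J_p(b))(a-b) \geq c_p |a-b|^p$, which immediately yields $\iint_{\Q} |(u_n-u)(x) - (u_n-u)(y)|^p\,d\mu \to 0$, i.e. $[u_n - u] \to 0$. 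For $1 < p < 2$ one uses instead $|a-b|^p \leq C_p \bigl[(J_p(a)-J_p(b))(a-b)\bigr]^{p/2}\bigl(|a|^p + |b|^p\bigr)^{(2-p)/2}$ together with Hölder's inequality and the boundedness of $([u_n])_n$ (which follows from weak convergence) to again conclude $[u_n - u] \to 0$.

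Finally I would recover convergence in the full norm $\|\cdot\|$, not just the seminorm $[\cdot]$. Since $u_n \rightharpoonup u$ in $X$ and the embedding $X \hookrightarrow L^p(\Omega)$ is compact (this is part of the functional-analytic setup recalled from \cite{mupli} for the Neumann fractional space), we have $u_n \to u$ strongly in $L^p(\Omega)$. Combined with $[u_n - u] \to 0$ and the definition $\|u\|^p = [u]^p + \|u\|_{L^p(\Omega)}^p$, this gives $\|u_n - u\| \to 0$, as desired. The main obstacle is the case $1 < p < 2$: the monotonicity inequality degenerates and one cannot directly bound the $L^p$-type quantity by the duality pairing, so the Hölder argument with the correct exponents and the uniform bound on the seminorms must be carried out carefully; everything else is a direct consequence of monotonicity, weak convergence, and the compact embedding.
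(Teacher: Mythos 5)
Your argument is correct, but it finishes differently from the paper. You share the same opening: use the weak convergence to get $\langle A'(u),u_n-u\rangle\to 0$, combine with \eqref{lsup} and the monotonicity of $A'$ to conclude $\langle A'(u_n)-A'(u),u_n-u\rangle\to 0$. From there you invoke the quantitative Simon-type inequalities for $J_p$, splitting the cases $p\geq 2$ (where $(J_p(a)-J_p(b))(a-b)\geq c_p|a-b|^p$ gives $[u_n-u]\to 0$ at once) and $1<p<2$ (where the degenerate inequality plus H\"older and the boundedness of $([u_n])_n$ are needed), and then add the compact embedding into $L^p(\Omega)$ to control the zero-order part of the norm. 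The paper instead never uses these pointwise inequalities: it exploits only the convexity and weak lower semicontinuity of $A$ to deduce $A(u_n)\to A(u)$, i.e. $[u_n]\to[u]$, combines this with $u_n\to u$ in $L^p(\Omega)$ to get $\|u_n\|\to\|u\|$, and concludes by the uniform convexity of $X$ (weak convergence plus convergence of norms implies strong convergence). Your route is more quantitative and self-contained at the level of the integrand, at the price of the two-case algebraic analysis for $1<p<2$; the paper's route avoids any case distinction in $p$ but relies on the geometric (Radon--Riesz type) property of the uniformly convex space $X$. Both are complete proofs of the $(S)_+$ property.
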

\begin{proof}
Assume that $u_n \rightharpoonup u$ in $X$ and $\limsup \langle A'(u_n),u_n-u\rangle_{X',X} \leq 0$.
First of all, $A$ is convex, of class $C^1$ and weakly lower semicontinuous in $X$, so that
$A(u)\leq \liminf A(u_n)$.

Moreover, the linear functional $\langle A'(u),\cdot \rangle_{X',X}$ is in $X'$. So, since $u_n \rightharpoonup u$ in $X$,
\begin{equation}\label{A'to0}
\langle A'(u),u_n-u \rangle_{X',X} \to 0
\end{equation}
as $n\to \infty$. By the convexity of $A$, we get that $A'$ is a monotone operator, so that
\[
\langle A'(u_n)-A'(u),u_n-u\rangle_{X',X}\geq0.
\]
By \eqref{lsup}  we get
\[
0\leq \limsup_{n\to \infty} \langle A'(u_n)-A'(u),u_n-u\rangle_{X',X}\leq 0,
\]
and so 
\begin{equation}\label{Alim0}
\lim_{n\to \infty} \langle A'(u_n)-A'(u),u_n-u\rangle_{X',X}=0.
\end{equation}
Hence,  \eqref{A'to0} and \eqref{Alim0} imply that
\begin{equation}\label{Alim02}
\lim_{n\to \infty} \langle A'(u_n),u_n-u\rangle_{X',X}=0.
\end{equation}
Again by the convexity of $A$  we have that
\[
A(u)\geq \langle A'(u_n),u-u_n\rangle_{X',X}\geq A(u_n).
\]
By \eqref{Alim02},
$A(u)\geq \limsup A(u_n)$, and so 
\[
A(u)=\lim_{n\to \infty}A(u_n).
\]
By the compact embedding of $X$ into $L^p(\Omega)$ we also have $u_n \to u$ in $L^p(\Omega)$.
In the end, $\|u_n\|\to \|u\|$.
Hence, by the uniform convexity of $X$ (recall that $1<p<\infty$) , we obtain that $u_n$ converges strongly to $u$ in $X$  as $n\to \infty$.
\end{proof}

\subsection{With the Ambrosetti-Rabinowitz condition}
This case is the easy one, which we present just to show the extension of the approach in \cite{dela} to the nonlocal case.

Here we will further assume the following hypotheses on $g$:
\begin{itemize}
\item[($g_1$)] there exist constants $a_1,a_2>0$ and $q>p$ such that for every $t\in \R$ and for a.e. $x\in \Omega$
\[
|g(x,t)|\leq a_1+a_2 |t|^{q-1},
\]
where $q<\frac{pN}{N-ps}$ if $N>ps$;

\item[($g_2$)] $g(x,t)=o(|t|^{p-1})$ as $t \to 0$ uniformly a.e. in $\Omega$;

\item[($g_3$)] denoting $G(x,t)=\int_0^t g(x,\tau)\, d\tau $, there exist $\mu>p$ and $R\geq 0$ such that for every $t$
with $|t|>R$ and for a.e. $x\in \Omega$
\[
0<\mu G(x,t) \leq g(x,t)t,
\]
and there exist $\tilde \mu>p$, $a_3>0$ and  $a_4\in L^1(\Omega)$ such that for every $t\in \R$ and a.e. $x\in \Omega$,
\begin{equation}\label{add}
G(x,t)\geq a_3|t|^{\tilde \mu} -a_4(x);
\end{equation}

\item[($g_4$)] if $R>0$, then $G(x,t)\geq 0$ for every $t\in \R$ and a.e. $x\in \Omega$.
\end{itemize}

\begin{rem}
Condition \eqref{add} was introduced in \cite{addendum} to complete the Ambrosetti-Rabinowitz condition in presence of a Carath\'eodory functions.
\end{rem}

Our first existence result is
\begin{theorem}\label{teolink}
If hypotheses $(g_1)-(g_4)$ hold, then problem \eqref{plink} admits a nontrivial weak solution.
\end{theorem}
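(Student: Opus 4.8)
The plan is to apply Theorem \ref{critp} with the linking data coming from part (c) of Theorem \ref{geom}, specialized via Corollary \ref{corgeom} to the cones $C_m^-$ and $C_{m+1}^+$ (more precisely, we pick $m$ with $\lambda_m<\lambda_{m+1}$, so that Theorem \ref{index} gives $i(C_m^-\setminus\{0\})=i(X\setminus C_m^+)=m$; for $\lambda$ in a spectral gap one may even take $m$ so that $\lambda_m\le\lambda<\lambda_{m+1}$). Set ${\mathcal C}_-=C_m^-$, ${\mathcal C}_+=C_m^+$, choose an element $e\in C_m^+$ with $\|e\|$ normalized (for instance an eigenfunction associated to $\lambda_{m+1}$, or simply any element not in $-C_m^-$), and form the sets $Q,H,S_+=S_+^{r_+}$ as in the statement of Theorem \ref{geom}, with radii $0<r_+<r_-$ to be fixed. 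By Proposition \ref{collegamento}, cohomological linking implies topological linking, so $(Q,D_-\cup H)$ links $S_+$ in the sense needed by Theorem \ref{critp}, with $D=Q$, $S=D_-\cup H$, $A=S_+$, $B=\emptyset$. It then remains to (i) verify the geometric inequalities $\sup_{D_-\cup H} I<\inf_{S_+} I$ and $\sup_Q I<\inf_{\emptyset} I=+\infty$ (the latter is automatic), and (ii) verify that $I$ satisfies the Cerami condition $(C)_c$ at the relevant level.

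For the geometry I would proceed as in \cite{dela}. On $C_m^-$ one has $[u]^p\le\lambda_m\|u\|_{L^p(\Omega)}^p$, hence on $D_-$, using $(g_1)$, $(g_2)$ and the subcritical Sobolev embedding $X\hookrightarrow L^q(\Omega)$, the functional $I(u)=\frac{1}{2p}[u]^p-\frac{\lambda}{p}\|u\|_{L^p}^p-\int_\Omega G(x,u)$ is bounded above, and in fact $\le 0$ if $\lambda\ge\lambda_m$ and $r_-$ is suitably adjusted (here $(g_4)$ guarantees $G\ge0$ when $R>0$, which controls the sign on $D_-$; when $R=0$ one uses $(g_3)$ directly). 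On $H$ one writes $u+te$ with $u\in C_m^-$, $t\ge0$, $\|u+te\|=r_-$; the point $(g_3)$ enters decisively here — the global lower bound $G(x,t)\ge a_3|t|^{\tilde\mu}-a_4(x)$ with $\tilde\mu>p$ forces $I(u+te)\to-\infty$ as $\|u+te\|\to\infty$ on the finite-"dimensional"-in-the-$e$-direction cone $C_m^-\oplus\mathbb R e$, so for $r_-$ large $\sup_H I$ can be made as negative as we like, in particular $\le 0$. For the lower estimate on $S_+$: since $[u]^p\ge\lambda_{m+1}\|u\|_{L^p}^p$ on $C_m^+$ and $\lambda<\lambda_{m+1}$, we get $\frac{1}{2p}[u]^p-\frac{\lambda}{p}\|u\|_{L^p}^p\ge c_0[u]^p$ for some $c_0>0$; combining this with $(g_2)$ (so $G(x,t)=o(|t|^p)$ near $0$) and $(g_1)$ (so the remainder is $O(\|u\|^q)$ with $q>p$), one obtains $I(u)\ge c_1 r_+^p-c_2 r_+^q>0$ for $r_+$ small. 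Fixing first $r_+$ small to make $\inf_{S_+}I>0$, then $r_-$ large to make $\sup_{D_-\cup H}I\le 0<\inf_{S_+}I$, closes the geometry.

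The main obstacle is the Cerami condition. Given a Cerami sequence $(u_n)$ with $I(u_n)\to c$ and $(1+\|u_n\|)I'(u_n)\to 0$ in $X'$, one first shows $(u_n)$ is bounded: test $I'(u_n)$ against $u_n$ and combine $\mu I(u_n)-\langle I'(u_n),u_n\rangle$ with $(g_3)$; the Ambrosetti–Rabinowitz inequality $\mu G(x,t)\le g(x,t)t$ for $|t|>R$, together with the $L^1$-control of $G$ and $g(\cdot,u)u$ on the bounded region $\{|u_n|\le R\}$ via $(g_1)$, yields $\big(\tfrac{\mu}{2p}-\tfrac12\big)[u_n]^p + (\text{l.o.t.})\le C(1+\|u_n\|)$, and since $\mu>p$ this gives an a priori bound on $[u_n]$; the $L^p(\Omega)$-part of $\|u_n\|$ is then controlled by the compact embedding after passing to a weakly convergent subsequence (alternatively by a standard argument using $(g_3)$ again). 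Then, up to a subsequence $u_n\rightharpoonup u$ in $X$ and $u_n\to u$ in $L^q(\Omega)$; writing $\langle A'(u_n),u_n-u\rangle = \langle I'(u_n),u_n-u\rangle + \lambda\int_\Omega|u_n|^{p-2}u_n(u_n-u) + \int_\Omega g(x,u_n)(u_n-u)$, the first term $\to 0$ since $I'(u_n)\to0$ and $(u_n-u)$ is bounded, and the last two terms $\to 0$ by the strong $L^q$- and $L^p$-convergence combined with $(g_1)$ (uniform integrability of $|g(x,u_n)|$) and Hölder. Hence $\limsup_n\langle A'(u_n),u_n-u\rangle\le 0$, and Proposition \ref{s+} — the $(S)_+$ property of $A'$ — yields $u_n\to u$ strongly in $X$. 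This establishes $(C)_c$ for every $c\in\mathbb R$. By Theorem \ref{critp} the value $c$ with $\inf_{S_+}I\le c\le\sup_Q I$ is a critical value, and since $\inf_{S_+}I>0$ while $I(0)=0$, the corresponding critical point is nontrivial, which is the desired weak solution of \eqref{plink}.
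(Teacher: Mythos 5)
Your proposal follows the same route as the paper (the cones \eqref{c-}--\eqref{c+}, Theorem \ref{index}, Corollary \ref{corgeom}(c) plus Proposition \ref{collegamento}, Theorem \ref{critp} with $S=D_-\cup H$, $D=Q$, $A=S_+$, $B=\emptyset$, and Proposition \ref{s+} for compactness), but two steps fail as written.

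First, the choice of the spectral gap. Because of the coefficient $\frac{1}{2p}$ in front of $[u]^p$ in \eqref{I}, on $C_m^+$ you only obtain $\frac{1}{2p}[u]^p-\frac{\lambda}{p}\int_\Omega|u|^p\,dx\geq\left(\frac{1}{2p}-\frac{\lambda}{p\lambda_{m+1}}\right)[u]^p$, and this constant is positive only when $\lambda_{m+1}>2\lambda$. Your assumption ``$\lambda<\lambda_{m+1}$'' (and the suggested choice $\lambda_m\leq\lambda<\lambda_{m+1}$) is therefore not enough: whenever $\lambda\geq\lambda_{m+1}/2$ the claimed bound $\frac{1}{2p}[u]^p-\frac{\lambda}{p}\|u\|_{L^p}^p\geq c_0[u]^p$ with $c_0>0$ is false and the lower estimate on $S_+$ collapses. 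The paper avoids this by rewriting $I(u)=\frac{1}{2p}\|u\|^p-\frac{2\lambda+1}{2p}\int_\Omega|u|^p\,dx-\int_\Omega G(x,u)\,dx$ and choosing $m$ so that $\lambda_m\leq 2\lambda+1<\lambda_{m+1}$ (possible since $(\lambda_m)_m$ diverges), which yields $I(u)\geq\frac{1}{2p}\left(1-\frac{2\lambda+1+\ve}{\lambda_{m+1}}\right)\|u\|^p-C\|u\|^q$ on $C_m^+$ and hence $\inf_{S_+}I\geq\alpha>0$ for $r_+$ small. The repair is easy (take $m$ larger), but it must be made, and it is exactly the point where the ``strange'' factor $\frac12$ of the Neumann setting matters.

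Second, the boundedness of the Palais--Smale/Cerami sequence. In $\mu I(u_n)-\langle I'(u_n),u_n\rangle$ the term $-\left(\frac{\mu}{p}-1\right)\lambda\int_\Omega|u_n|^p\,dx$ is not a lower-order term: it is $p$-homogeneous, of the same order as $[u_n]^p$, and it carries a negative sign, so the inequality $\left(\frac{\mu}{2p}-\frac12\right)[u_n]^p+(\text{l.o.t.})\leq C(1+\|u_n\|)$ does not follow as you state it. This is precisely where \eqref{add} is needed: from $G(x,t)\geq a_3|t|^{\tilde\mu}-a_4(x)$ with $\tilde\mu>p$ and the H\"older--Young estimate $\int_\Omega|u|^p\,dx\leq\ve\int_\Omega|u|^{\tilde\mu}\,dx+C_\ve$, the negative $L^p$-term is absorbed into the Ambrosetti--Rabinowitz surplus $(\mu-k)\int_\Omega G(x,u_n)\,dx$ (with $k\in(p,\mu)$), leaving $kI(u_n)-\langle I'(u_n),u_n\rangle\geq\left(\frac{k}{2p}-\frac12\right)\|u_n\|^p-\tilde C$, which bounds the full norm $\|u_n\|$, not only $[u_n]$. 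Your fallback for the $L^p$-part of $\|u_n\|$ (``compact embedding after passing to a weakly convergent subsequence'') is circular, since weak sequential compactness in $X$ presupposes the very bound you are trying to prove; the parenthetical ``standard argument using $(g_3)$'' is indeed the correct fix, but it has to be carried out as above. Once these two points are repaired, the remaining steps (strong convergence via Proposition \ref{s+}, the linking geometry, and the conclusion $c\geq\alpha>0$ giving a nontrivial solution) coincide with the paper's proof of Theorem \ref{teolink}.
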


In order to prove Theorem \ref{teolink} it will be enough to apply Theorem \ref{critp}  to the functional $I$ defined in \eqref{I} under the validity of the Palais-Smale condition (of course, if the Cerami condition holds, the Palais-Smale condition holds, as well); hence, we will apply Theorem \ref{critp} in the version of \cite[Theorem 2.2]{dela}, where the Palais-Smale condition is assumed.

Thus, now we prove that $I$ satisfies the Palais-smale condition at any level $c\in\R$ - $(PS)_c$ for short -, that is
\begin{center}
for every sequence $(u_n)_n$ in $X$ such that $I(u_n)\to c$ and $I'(u_n)\to 0$ in $X'$,
there exists a strongly converging subsequence of $(u_n)_n$.
\end{center}

\begin{prop}\label{pslink}
Under the assumptions of Theorem $\ref{teolink}$, $I$ satisfies $(PS)_c$ for every $c\in \R$.
\end{prop}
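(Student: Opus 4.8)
The plan is to run the classical Ambrosetti--Rabinowitz compactness scheme, the only genuine difficulty being the boundedness of $(PS)_c$ sequences: since here $\lambda\ge 0$, the term $-\frac{\lambda}{p}\|u\|_{L^p(\Omega)}^p$ in $I$ has an unfavourable sign, and the usual coupling of $I(u_n)$ with $\langle I'(u_n),u_n\rangle$ does not by itself suffice. This is exactly where the additional hypothesis \eqref{add} comes into play.

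So, fix $(u_n)_n\subset X$ with $I(u_n)\to c$ and $\varepsilon_n:=\|I'(u_n)\|_{X'}\to 0$. A direct computation (recall the factor $\frac12$ in front of $[u]^p$ in $I$) gives
\[
pI(u_n)-\langle I'(u_n),u_n\rangle=\int_\Omega\bigl(g(x,u_n)u_n-pG(x,u_n)\bigr)\,dx,
\]
in which the terms $[u_n]^p$ and $\lambda\|u_n\|_{L^p(\Omega)}^p$ cancel. On $\{|u_n|>R\}$ the first inequality in $(g_3)$ gives $g(x,u_n)u_n-pG(x,u_n)\ge(\mu-p)G(x,u_n)>0$, and \eqref{add} gives $G(x,u_n)\ge a_3|u_n|^{\tilde\mu}-a_4(x)$; on $\{|u_n|\le R\}$ the integrand is bounded by a constant because of $(g_1)$. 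Since the left-hand side is $\le C+\varepsilon_n\|u_n\|$, this yields $\|u_n\|_{L^{\tilde\mu}(\Omega)}^{\tilde\mu}\le C(1+\varepsilon_n\|u_n\|)$. Next I would use the combination
\[
\mu I(u_n)-\langle I'(u_n),u_n\rangle=\frac{\mu-p}{2p}[u_n]^p-\frac{\mu-p}{p}\lambda\|u_n\|_{L^p(\Omega)}^p+\int_\Omega\bigl(g(x,u_n)u_n-\mu G(x,u_n)\bigr)\,dx,
\]
whose last integrand is nonnegative on $\{|u_n|>R\}$ by $(g_3)$ and bounded on $\{|u_n|\le R\}$, to obtain $\frac{\mu-p}{2p}[u_n]^p\le C+\varepsilon_n\|u_n\|+\frac{\mu-p}{p}\lambda\|u_n\|_{L^p(\Omega)}^p$. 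Since $\tilde\mu>p$, Hölder's inequality on the bounded set $\Omega$ bounds $\|u_n\|_{L^p(\Omega)}^p$ by a power $<1$ of $\|u_n\|_{L^{\tilde\mu}(\Omega)}^{\tilde\mu}$, hence by $C(1+\varepsilon_n\|u_n\|)^{p/\tilde\mu}$; plugging this in and recalling $\|u_n\|^p=[u_n]^p+\|u_n\|_{L^p(\Omega)}^p$ one gets $\|u_n\|^p\le C\bigl(1+\varepsilon_n\|u_n\|+(\varepsilon_n\|u_n\|)^{p/\tilde\mu}\bigr)$, and, since $p>1$ and $\varepsilon_n\to0$, dividing by $\|u_n\|^p$ and arguing by contradiction gives that $(u_n)_n$ is bounded in $X$.

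Once boundedness is established, up to a subsequence $u_n\rightharpoonup u$ in $X$, and by the compactness of the embedding of $X$ into $L^r(\Omega)$ for $1\le r<\frac{pN}{N-ps}$ (for every $r\ge1$ if $N\le ps$) we also have $u_n\to u$ in $L^p(\Omega)$ and in $L^q(\Omega)$ with $q$ as in $(g_1)$, and $u_n\to u$ a.e.\ in $\Omega$. With $A(u)=[u]^p$ as in Proposition~\ref{s+}, so that $\langle I'(u),v\rangle=\frac1{2p}\langle A'(u),v\rangle-\lambda\int_\Omega|u|^{p-2}uv\,dx-\int_\Omega g(x,u)v\,dx$, one has $\langle I'(u_n),u_n-u\rangle\to0$ (a bounded factor times $\varepsilon_n$); moreover $\lambda\int_\Omega|u_n|^{p-2}u_n(u_n-u)\,dx\to0$ by Hölder, and $\int_\Omega g(x,u_n)(u_n-u)\,dx\to0$ since $(g_1)$ makes $(g(\cdot,u_n))_n$ bounded in $L^{q'}(\Omega)$ while $u_n-u\to0$ in $L^q(\Omega)$. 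Hence $\langle A'(u_n),u_n-u\rangle\to0$, and the $(S)_+$ property of $A'$ from Proposition~\ref{s+} gives $u_n\to u$ strongly in $X$, which proves $(PS)_c$.

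The main obstacle is precisely the a priori bound on $(PS)_c$ sequences: for $\lambda=0$ it is textbook, but for $\lambda\ge0$ the $L^p$-term has to be reabsorbed, and the only available mechanism for this is the extra coercivity \eqref{add}, at the cost of keeping track of the sublinear error terms $\varepsilon_n\|u_n\|$ and $(\varepsilon_n\|u_n\|)^{p/\tilde\mu}$. After that, the extraction of a strongly convergent subsequence is routine and relies entirely on Proposition~\ref{s+} and the compact Sobolev embedding.
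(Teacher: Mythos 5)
Your proof is correct and follows essentially the same strategy as the paper's: boundedness of the $(PS)_c$ sequence is obtained from the Ambrosetti--Rabinowitz condition $(g_3)$ combined with \eqref{add}, which is precisely what compensates the nonnegative $\lambda$-term, and strong convergence then follows from the compact embedding and the $(S)_+$ property of Proposition \ref{s+}. The only difference is bookkeeping in the boundedness step: the paper uses a single combination $kI(u_n)-\langle I'(u_n),u_n\rangle$ with an intermediate $k\in(p,\mu)$ and absorbs the $L^p$-term via $\|u_n\|_{L^p(\Omega)}^p\le\varepsilon\|u_n\|_{L^{\tilde\mu}(\Omega)}^{\tilde\mu}+C_\varepsilon$, whereas you use the two combinations with multipliers $p$ and $\mu$ together with a H\"older interpolation of exponent $p/\tilde\mu<1$, which is an equally valid route to the same a priori bound.
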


\begin{proof}
Let $(u_n)_n$ in $X$ be such that $I(u_n)\to c$ and $I'(u_n)\to 0$ and fix $k\in (p,\mu)$.  We re-write the functional in the following way:
\[
\begin{aligned}
I(u)&=\frac{1}{2p}\int\int_\Q\frac{|u(x)-u(y)|^p}{|x-y|^{N+ps}}\, dxdy+\frac{1}{2p}\int_\Omega |u|^pdx\\
&-\left(\frac{\lambda}{p}+\frac{1}{2p}\right)\int_\Omega |u|^p\,dx 
-\int_\Omega G(x,u)\,dx\\
&=\frac{1}{2p}\|u\|^p-\left(\frac{\lambda}{p}+\frac{1}{2p}\right)\int_\Omega |u|^p\,dx 
-\int_\Omega G(x,u)\,dx.
\end{aligned}
\]

We observe that 
\begin{equation}\label{psbound}
k I(u_n)- \langle I'(u_n),u_n \rangle \leq M  + N \|u_n\|
\end{equation}
for some $M,N>0$ and all $n\in \N$. On the other hand, by $(g_3)$ and $(g_1)$ we have
\begin{align*}
k&I(u_n)-\langle I'(u_n),u_n \rangle \\
&=\left(\frac{k}{2p}-\frac{1}{2}\right)\|u_n\|^p-\left(\frac{k}{p}-1\right)\left(\lambda+\frac{1}{2}\right)\int_\Omega |u_n|^p\,dx \\
&+\int_\Omega \big(g(x,u_n)u_n-kG(x,u_n)\big)\,dx \\
&\geq \left(\frac{k}{2p}-\frac{1}{2}\right)\|u_n\|^p-\left(\frac{k}{p}-1\right)\left(\lambda+\frac{1}{2}\right)\int_\Omega |u_n|^p\,dx \\
&+(\mu -k)\int_\Omega G(x,u_n)\,dx -C_R
\end{align*}
for some constant $C_R\geq 0$. By \eqref{add}, we get
\begin{align*}
k&I(u_n)-\langle I'(u_n),u_n \rangle \\
&\geq \left(\frac{k}{2p}-\frac{1}{2}\right)\|u_n\|^p-\left(\frac{k}{p}-1\right)\left(\lambda+\frac{1}{2}\right)\int_\Omega |u_n|^p\,dx \\
&+(\mu -k)a_3\int_\Omega |u_n|^{\tilde\mu} \,dx -C
\end{align*}
for some constant $C\geq 0$. By the H\"older and the Young inequalities, we get that for any $\varepsilon>0$ we have that for every $u\in X$
\[
\|u\|_p^p \leq \varepsilon \|u\|_{\tilde\mu}^{\tilde\mu} + C_\varepsilon.
\]
Thus, we obtain
\begin{align*}
k&I(u_n)-\langle I'(u_n),u_n \rangle \\
&\geq \left(\frac{k}{2p}-\frac{1}{2}\right)\|u_n\|^p
+\left[(\mu -k)a_3- \varepsilon \left( \frac{k}{p}-1\right)\left(\lambda+\frac{1}{2}\right)\right] \int_\Omega |u_n|^{\tilde \mu}\,dx -\tilde{C_\varepsilon}
\end{align*}
for some $\tilde{C_\varepsilon}>0$.
Taking $\varepsilon$ small enough, we get
\[
kI(u_n)-\langle I'(u_n),u_n \rangle \geq \left(\frac{k}{2p}-\frac{1}{2}\right)\|u_n\|^p -\tilde{C_\varepsilon}.
\]
This together with \eqref{psbound} implies that $(u_n)_n$ is bounded in $X$. Up to a subsequence, we can assume that
$u_n\rightharpoonup u$ in $X$ and $u_n \to u$ in $L^p(\Omega)$ as $n\to \infty$. By assumption, we have
\[
\langle I'(u_n),u_n-u \rangle \to 0.
\]
On the other hand
\begin{align*}
\langle& A'(u_n),u_n-u \rangle \\
&=\langle I'(u_n),u_n-u \rangle
+\lambda\int_\Omega |u_n|^{p-2}u_n(u_n-u)\,dx+\int_\Omega g(x,u_n)(u_n-u)\,dx.
\end{align*}
Since $u_n \to u$ in $L^p(\Omega)$, from ($g_1$) we obtain that
\[
\int_\Omega |u_n|^{p-2}u_n(u_n-u)\,dx \to 0 
\]
and
\[
\int_\Omega g(x,u_n)(u_n-u)\,dx \to 0;
\]
so $\langle A'(u_n),u_n-u \rangle_{X',X} \to 0$ as $n\to \infty$.
By Proposition \ref{s+} we get that  $u_n \to u$ in $X$, as desired.
\end{proof}

Now we are ready to prove Theorem \ref{teolink}.

\begin{proof}
Let $(\lambda_m)_m$ be the sequence of eigenvalues defined in \eqref{lm}.
Since this sequence is divergent, there exists $m\geq 1$ such that $\lambda_m \leq 2\lambda +1< \lambda_{m+1}$. 
Defining $C_m^-$ and $C_m^+$ as in \eqref{c-} and \eqref{c+}, we have that $C_m^-$,$C_m^+$ are two symmetric closed cones in $X$
with $C_m^-\cap C_m^+ =\lbrace0\rbrace$. We recall that by Theorem \ref{index} we have
\[
i(C_m^-\setminus \lbrace0\rbrace)= i(X\setminus C_m^+)=m.
\]

Now, by $(g_1)$ and $(g_2)$ it is standard to see that for any $\ve>0$ there exists $C_\ve>0$ such that
\[
|G(x,t)|\leq \frac{\ve}{2p} |t|^p+C_\ve|t|^q
\]
for a.e. $x\in \Omega$ and all $t\in \R$. As a consequence, taking $u\in C_m^+$, by the inequality in \eqref{c+} and the Sobolev inequality, we have that
\[
\begin{aligned}
I(u)&\geq\frac{1}{2p}\|u\|^p -\frac{2\lambda+1}{2p}\int_\Omega |u|^pdx-\frac{\ve}{2p} \int_\Omega |u|^pdx-C_\ve \int_\Omega |u|^qdx\\
&\geq\frac{1}{2p}\|u\|^p -\frac{1}{2p\lambda_{m+1}}\left(2\lambda+1+\ve\right)[u]^p-C_\ve \int_\Omega |u|^qdx\\
& \geq\frac{1}{2p}\left(1-\frac{2\lambda+1+\ve}{\lambda_{m+1}}\right)\|u\|^p -C\|u\|^q
\end{aligned}
\]
for some $C>0$.

Hence, choosing $\ve$ small enough, there exists $r_+>0$ and $\alpha>0$ such that, if $\|u\|=r_+$, then $I(u)\geq \alpha$. 

On the other hand, taking $u\in C_m^-$, $e\in X\setminus C_m^-$ and $t>0$, by \eqref{add} we get that
\begin{align*}
I(u+te)&\leq \frac{2^{p-2}}{p}\left( \int\int_\Q\frac{|u(x)-u(y)|^p}{|x-y|^{N+ps}}\, dxdy 
+t^p\int\int_\Q\frac{|e(x)-e(y)|^p}{|x-y|^{N+ps}}\, dxdy \right)\\
&-\frac{\lambda}{p} \int_\Omega |u+te|^p\,dx -a_3t^{\tilde \mu}\int_\Omega \left|\frac{u}{t}+e\right|^{\tilde\mu} \,dx+\|a_4\|_1 \to -\infty
\end{align*} 
as $t \to +\infty$.
In conclusion, there exists $r_->r_+$ such that $I(v)\leq 0$ when $v\in C_m^-+(\R^+e)$ and $\|v\|\geq r_-$.

Defining $D_-,S_+,Q$ and $H$ as in Theorem \ref{geom}, by Corollary \ref{corgeom} we have that $(Q,D_-\cup H)$ links $S_+$
cohomologically in dimension $m+1$ over $\mathbb{Z}_2$. In particular, $(Q,D_-\cup H)$ links $S_+$ by Proposition \ref{collegamento}.
In addition, $I$ is bounded on $Q$, $I(u)\leq 0$ for every $u\in D_-\cup H$ and $I(u)\geq \alpha>0$ for every $u\in S_+$.
By Proposition \ref{pslink} $(PS)_c$ holds.
Finally, by applying Theorem \ref{critp} with $S=D_-\cup H$, $D=Q$, $A=S_+$ and $B=\emptyset$, $I$ admits a critical value $c\geq \alpha$, hence there exists a critical point $u$ with $I(u)=c>0$.
It follows that $u$ is a nontrivial weak solution of \eqref{plink}.
\end{proof}

\subsection{Without the Ambrosetti-Rabinowitz condition}

In this section we consider the problem
 
\begin{equation}\label{plinkc}
\begin{cases}
(-\Delta)^s_p u =\lambda |u|^{p-2}u + f(x,u) \quad $ in $ \Omega,
\\
\mathscr{N}_{s,p}u=0  \quad \quad $ in $ \R^N \setminus \overline{\Omega},
\end{cases},
\end{equation}
where $\lambda \geq 0$ and $f:\Omega\times \R \to \R$ is a Carath\'eodory function such that $f(x,0)=0$ for almost every
$x\in \Omega$. This time, we assume the following hypotheses on $f$, first introduced in \cite{MP}:
\begin{itemize}
\item[$(f_1)$] there exists $a\in L^q(\Omega)$, $a\geq 0$, with $q\in ((p^*_s)',p)$, $c>0$ and $r\in (p,p^*_s)$ such that
$$|f(x,t)|\leq a(x)+c|t|^{r-1} $$
for a.e. $x \in \Omega$ and for all $t \in \R$;
\item[$(f_2)$] denoting $F(x,t)=\int_0^t f(x,\tau)d\tau $, we have 
$$\lim_{t\to \pm \infty}\frac{F(x,t)}{|t|^p}=+\infty $$
uniformly for a.e. $ x\in \Omega$;
\item[$(f_3)$] if $\sigma(x,t):=f(x,t)t-pF(x,t)$, then there exist $\vartheta\geq1$ and $\beta^* \in L^1(\Omega)$, $\beta^*\geq0$, such that
\[
\sigma(x,t_1)\leq \vartheta\sigma(x,t_2)+\beta^*(x)
\]
for a.e. $x\in \Omega$ and all $0\leq t_1 \leq t_2$ or $t_2\leq t_1 \leq 0$;
\item[$(f_4)$] 
$$\lim_{t\to 0} \frac{f(x,t)}{|t|^{p-2}t}=0 $$
uniformly for a.e. $x\in \Omega$.
\end{itemize}
In $(f_1)$ we have denoted by $p^*_s$ the fractional Sobolev exponent of order $s$, that is
\[
p^*_s=\begin{cases}
\dfrac{pN}{N-ps}& \mbox{ if }ps<N,\\
\infty &\mbox{ if }ps\geq N.
\end{cases}
\]
In this way, the embedding in $L^q(\Omega)$ of $W^{s,p}(\Omega)$ (and thus of $X$) is compact for every $q<p^*_s$.

As before, we give the definition of a weak solution.
\begin{dfn}
Let $u\in X$. We say that $u$ is a weak solution of problem \eqref{plink} if
\[
\frac{1}{2}\int\int_\Q \frac{J_p(u(x)-u(y))(v(x)-v(y))}{|x-y|^{N+ps}}\, dxdy
= \lambda\int_\Omega |u|^{p-2}uv\,dx+\int_\Omega f(x,u)v\,dx
\]
for every $v\in X$.
\end{dfn}
Again, any critical point of the $C^1$ functional $\E:X\to \R$ defined as
\[
\E(u)=\frac{1}{2p}\int\int_\Q\frac{|u(x)-u(y)|^p}{|x-y|^{N+ps}}\, dxdy-\frac{\lambda}{p}\int_\Omega |u|^p\,dx 
-\int_\Omega F(x,u)\,dx
\]
is a weak solution of \eqref{plink}.

The main result of this section is the following.

\begin{theorem}\label{thlc}
If hypotheses $(f_1)$-$(f_4)$ hold, then problem \eqref{plinkc} admits two nontrivial
constant sign solutions. More precisely, one solution is strictly positive and the other one is strictly negative in $\R^N$.
\end{theorem}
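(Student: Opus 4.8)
The plan is to obtain the strictly positive solution as a critical point of a one-sided truncation of $\E$, and the strictly negative one by the obvious symmetric argument. Set $f_+(x,t)=f(x,t^+)$ (so $f_+(x,\cdot)\equiv 0$ on $(-\infty,0]$ and $f_+(x,\cdot)=f(x,\cdot)$ on $[0,+\infty)$), $F_+(x,t)=\int_0^t f_+(x,\tau)\,d\tau$, and
\[
\E_+(u)=\frac{1}{2p}[u]^p-\frac{\lambda}{p}\int_\Omega (u^+)^p\,dx-\int_\Omega F_+(x,u)\,dx ,
\]
with $u^\pm=\max\{\pm u,0\}$, and define $\E_-$ symmetrically. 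Since $(f_1)$ and $(f_4)$ pass to $f_\pm$, one has $\E_\pm\in C^1(X)$. I would first check that every critical point $u$ of $\E_+$ is nonnegative: testing $\E_+'(u)=0$ with $u^-$ makes all lower order terms vanish on $\{u<0\}$, and the elementary inequality $J_p(a-b)(a^--b^-)\le-|a^--b^-|^p$ (valid for all $p>1$) gives $[u^-]^p\le 0$, so $u^-$ is constant; as a negative constant $c$ satisfies $\E_+(c)=0$ while the critical level we shall produce is strictly positive, necessarily $u^-\equiv 0$. Hence such a $u$ is a nonnegative weak solution of \eqref{plinkc}.

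For the variational geometry I would mimic the proof of Theorem \ref{teolink}. Fix $m\ge 1$ with $\lambda_m\le 2\lambda+1<\lambda_{m+1}$ and the cones $C_m^\pm$; by Theorem \ref{index} and Corollary \ref{corgeom} the conclusions of Theorem \ref{geom} hold with this $m$ and $\mathbb{K}=\mathbb{Z}_2$. From $(f_1)$ and $(f_4)$, for every $\ve>0$ there is $C_\ve$ with $|F_+(x,t)|\le\frac{\ve}{2p}|t|^p+C_\ve|t|^r$, so that on $C_m^+$ the inequality defining the cone and the Sobolev embedding give $\E_+(u)\ge c_1\|u\|^p-c_2\|u\|^r$ with $c_1>0$, whence $\E_+\ge\alpha>0$ on a small sphere $S_+$ of $C_m^+$. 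Since \eqref{add} is no longer available, for the lower estimates I would use $(f_2)$: on $C_m^-$ one has $\|u\|^p\le(\lambda_m+1)\|u\|_{L^p(\Omega)}^p$, so $\|\cdot\|$ and $\|\cdot\|_{L^p(\Omega)}$ are equivalent there, while $(f_2)$ yields, for every $K>0$, a constant $C_K$ with $F_+(x,t)\ge K|t|^p-C_K$ on $[0,+\infty)$; proceeding as in Theorem \ref{teolink} and choosing a suitable $e$ with $-e\notin C_m^-$ and $e^+\not\equiv 0$, one checks the linking inequalities $\sup_{D_-\cup H}\E_+<\inf_{S_+}\E_+$ and $\sup_Q\E_+<+\infty$ of Theorem \ref{critp} with $S=D_-\cup H$, $D=Q$, $A=S_+$, $B=\emptyset$ and $r_->r_+$; assertion (c) of Theorem \ref{geom} and Proposition \ref{collegamento} then provide the required linking.

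The core difficulty is the $(C)_c$ condition for $\E_+$ at each level $c\ge\alpha$, the Ambrosetti--Rabinowitz bound being unavailable and having to be replaced by $(f_3)$. From the usual identity, a Cerami sequence $(u_n)$ at level $c$ satisfies $\int_\Omega\sigma(x,u_n^+)\,dx=p\,\E_+(u_n)-\langle\E_+'(u_n),u_n\rangle\to pc$. Suppose $\|u_n\|\to\infty$ and put $v_n=u_n/\|u_n\|\rightharpoonup v$, with $v_n\to v$ in $L^p(\Omega)$, in $L^r(\Omega)$ and a.e.; if $v^+\not\equiv 0$ then $u_n\to+\infty$ a.e.\ on $\{v>0\}$, so $(f_2)$ and Fatou's lemma force $\E_+(u_n)/\|u_n\|^p\to-\infty$, which is impossible; hence $v\le 0$ a.e. If in addition $\|v\|_{L^p(\Omega)}<1$, I take $s_n=(2pR)^{1/p}/\|u_n\|\in(0,1]$; since $v\le 0$ annihilates the truncated terms in the limit, $\E_+(s_nu_n)\to R(1-\|v\|_{L^p(\Omega)}^p)$, and letting $R\to+\infty$ gives $\max_{s\in[0,1]}\E_+(su_n)\to+\infty$, so this maximum is attained at some interior $s_n^*$, where $\langle\E_+'(s_n^*u_n),s_n^*u_n\rangle=0$ and therefore $\E_+(s_n^*u_n)=\frac1p\int_\Omega\sigma(x,s_n^*u_n^+)\,dx$. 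As $0\le s_n^*u_n^+\le u_n^+$, $(f_3)$ yields $\int_\Omega\sigma(x,s_n^*u_n^+)\,dx\le\vartheta\int_\Omega\sigma(x,u_n^+)\,dx+\|\beta^*\|_{L^1(\Omega)}\to\vartheta pc+\|\beta^*\|_{L^1(\Omega)}<+\infty$, contradicting $\E_+(s_n^*u_n)\to+\infty$. The borderline case $\|v\|_{L^p(\Omega)}=1$, in which $v$ is a negative constant, must be excluded by a separate argument showing $\E_+(u_n)\to 0$ along such a sequence, against $c\ge\alpha>0$; this is the most delicate point. Once $(u_n)$ is bounded, $u_n\rightharpoonup u$ up to a subsequence, the compact embeddings of $X$ into $L^p(\Omega)$, $L^r(\Omega)$ and $L^{q'}(\Omega)$ together with $(f_1)$ give $\int_\Omega g_+(x,u_n)(u_n-u)\,dx\to 0$ (where $g_+(x,t)=\lambda(t^+)^{p-1}+f_+(x,t)$), hence $\langle A'(u_n),u_n-u\rangle\to 0$, and Proposition \ref{s+} yields $u_n\to u$ in $X$.

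Then Theorem \ref{critp} provides a critical point $u\ge 0$ of $\E_+$ with $\E_+(u)\ge\alpha>0$; in particular $u\not\equiv 0$, so $u$ solves \eqref{plinkc}. Setting $d(x)=|f(x,u(x))|/u(x)^{p-1}$ on $\{u>0\}$ and $d(x)=0$ on $\{u=0\}$, which is bounded thanks to $(f_4)$ near $\{u=0\}$ and to the $L^\infty$ bound coming from the subcritical regularity theory elsewhere, one has $(-\Delta)^s_pu+d(x)|u|^{p-2}u\ge\lambda u^{p-1}\ge 0$ in $\Omega$, and the strong maximum principle for $(-\Delta)^s_p$ gives $u>0$ in $\Omega$. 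Finally, for $x\notin\overline\Omega$ the Neumann condition says that $u(x)$ is the unique zero of the strictly increasing function $t\mapsto\int_\Omega|t-u(y)|^{p-2}(t-u(y))|x-y|^{-N-ps}\,dy$, which at $t=0$ takes the negative value $-\int_\Omega u(y)^{p-1}|x-y|^{-N-ps}\,dy$, so $u(x)>0$ there too, i.e.\ $u>0$ in $\R^N$. Running the same argument with $\E_-$ produces a second solution, strictly negative in $\R^N$, completing the proof. The main obstacle, as anticipated, is the verification of the Cerami condition and, within it, the exclusion of unbounded sequences in the direction of negative constants.
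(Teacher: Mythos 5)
Your outline follows the paper's strategy in most respects (truncated functionals, the cones $C_m^\pm$ with $\lambda_m\le 2\lambda+1<\lambda_{m+1}$, assertion (c) of Theorem \ref{geom} with Proposition \ref{collegamento}, the $(f_3)$ scaling trick for Cerami sequences, Proposition \ref{s+} for strong convergence, and a maximum principle at the end), but there is a genuine gap, and it sits exactly at the point you yourself flag as ``the most delicate point''. With your truncation $\E_+(u)=\frac{1}{2p}[u]^p-\frac{\lambda}{p}\int_\Omega(u^+)^p\,dx-\int_\Omega F(x,u^+)\,dx$, the direction of negative constants is completely unpenalized: every nonpositive constant is a critical point of your $\E_+$ at level $0$, so for instance $(C)_0$ fails outright (take $u_n\equiv-n$), and testing $\E_+'(u_n)$ with $-u_n^-$ only controls the seminorm $[u_n^-]$, not $\|u_n^-\|_{L^p(\Omega)}$. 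Consequently, in your Cerami argument the case $v_n=u_n/\|u_n\|\to v$ with $v$ a negative constant ($\|v\|_{L^p(\Omega)}=1$, $[v]=0$) cannot be reached by the scaling/$(f_3)$ device, since $R(1-\|v\|_{L^p(\Omega)}^p)=0$; in this regime $[u_n]=o(\|u_n\|)$ and $\|u_n^+\|_{L^p}=o(\|u_n\|)$, but $u_n^+$ need not be bounded, and I do not see how to prove your claimed conclusion $\E_+(u_n)\to0$ from the available information. Since this is the case you leave unproved, the verification of $(C)_c$ for $c\ge\alpha>0$ is incomplete, and this is not a routine detail: it is precisely the obstruction created by $\lambda_1=0$ having constant eigenfunctions in the Neumann setting.

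The paper circumvents this by choosing a different truncation, namely $\E_\pm(u)=\frac{1}{2p}[u]^p+\frac{1}{p}\int_\Omega|u|^p\,dx-\frac{\lambda+1}{p}\int_\Omega|u^\pm|^p\,dx-\int_\Omega F(x,u^\pm)\,dx$, which still coincides with $\E$ on the appropriate sign cone but keeps an un-cancelled term $\frac1p\int_\Omega|u^\mp|^p\,dx$ penalizing the wrong-sign part, constants included. Then testing the Cerami condition with $h=-u_n^-$ and using \eqref{disug} controls both $[u_n^-]^p$ and $\int_\Omega(u_n^-)^p\,dx$, so $u_n^-\to0$ in the full norm of $X$ at the very start of the proof of Proposition \ref{C}; the subsequent unboundedness analysis concerns only $u_n^+$ (normalized as $y_n=u_n^+/\|u_n^+\|$), and the two cases $y\not\equiv0$ and $y\equiv0$ are then handled essentially as in your second and third cases. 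The same device also gives the sign of the critical point directly ($[u^-]^p+\|u^-\|_{L^p(\Omega)}^p\le0$, hence $u^-\equiv0$), where you instead need the energy-level argument to rule out negative constants. So either you must supply the missing argument for the negative-constant regime (which appears genuinely hard with your functional), or you should modify the truncation as the paper does, after which your remaining steps go through along the paper's lines.
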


First of all, we introduce the functionals
\begin{align*}
\E_\pm(u)&=\frac{1}{2p}\int\int_\Q\frac{|u(x)-u(y)|^p}{|x-y|^{N+ps}}\, dxdy
+\frac{1}{p}\int_\Omega |u|^pdx  \\
&-\frac{\lambda+1}{p}\int_\Omega |u^\pm|^pdx -\int_\Omega F(x,u^\pm)\,dx ,
\end{align*}
where $u^+:=\max\{u,0\}$ and $u^-:=\max\{-u,0\}$ are the classical positive part and negative part of $u$, respectively. Notice that $\E_+(u)=\E(u)$ for every $u\geq0$ and $\E_-(u)=\E(u)$ for every $u\leq0$.

The following algebraic inequalities will be very useful in the following:
\begin{equation}\label{disug}
|x^--y^-|^p \leq |x-y|^{p-2}(x-y)(y^--x^-),
\end{equation}
\begin{equation}\label{disug1}
|x^+-y^+|^p \leq |x-y|^{p-2}(x-y)(x^+-y^+),
\end{equation}
\begin{equation}\label{disug2}
|x-y|^p\leq 2^{p-1}(|x^+-y^+|^p+|x^--y^-|^p)
\end{equation}
and
\begin{equation}\label{11}
|x^\pm-y^\pm|\leq |x-y|
\end{equation}
for any $x,y\in \R$. The proofs are obvious.

\begin{prop}\label{C}
Under the assumptions of Theorem $\ref{thlc}$, $\E_\pm$ satisfies $(C)_c$ for every $c\in \R$.
\end{prop}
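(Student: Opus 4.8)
The plan is to verify the $(C)_c$ condition for $\E_+$; the argument for $\E_-$ is entirely symmetric, replacing $u^+$ by $u^-$ and using \eqref{disug1} where below I use \eqref{disug}. So let $(u_n)_n\subset X$ satisfy $\E_+(u_n)\to c$ and $(1+\|u_n\|)\E'_+(u_n)\to 0$ in $X'$; this already forces $\E'_+(u_n)\to 0$ in $X'$ and, crucially, $\langle\E'_+(u_n),w_n\rangle\to 0$ whenever $\|w_n\|\le\|u_n\|$.

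The first step is to show that $u_n^-\to 0$ in $X$. Testing $\E'_+(u_n)$ with $-u_n^-$ and using the algebraic inequality \eqref{disug} to bound the double integral from below by $\tfrac12[u_n^-]^p$, one obtains $\langle\E'_+(u_n),-u_n^-\rangle\ge\tfrac12[u_n^-]^p+\|u_n^-\|_p^p\ge\tfrac12\|u_n^-\|^p$; since $\|{-u_n^-}\|\le\|u_n\|$ the left-hand side is infinitesimal, so $u_n^-\to 0$. In particular, if later $\|u_n\|\to\infty$, any weak limit of $u_n/\|u_n\|$ is nonnegative.

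The second, and in my view the main obstacle, is the boundedness of $(u_n)_n$, since the Ambrosetti--Rabinowitz condition is replaced by the weaker $(f_3)$. I would first record that, by definition of $\sigma$ and since $\E_+(u)=\tfrac1{2p}[u]^p+\tfrac1p\|u\|_p^p-\tfrac{\lambda+1}{p}\|u^+\|_p^p-\int_\Omega F(x,u^+)\,dx$, one has $p\E_+(u_n)-\langle\E'_+(u_n),u_n\rangle=\int_\Omega\sigma(x,u_n^+)\,dx$, so that $\big(\int_\Omega\sigma(x,u_n^+)\,dx\big)_n$ is bounded. Arguing by contradiction, suppose $\|u_n\|\to\infty$, set $v_n=u_n/\|u_n\|$, and pass to a subsequence with $v_n\rightharpoonup v$ in $X$, $v_n\to v$ in $L^q(\Omega)$ for all $q<p^*_s$ and a.e.; by the first step $v\ge0$. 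If $v\ne 0$: from $(f_1)$ and $(f_2)$ one produces, for every $K>0$, constants $T_K,C_K>0$ with $F(x,t)\ge K|t|^p-C_K-T_K a(x)$ for all $t$ and a.e.\ $x$; dividing $\E_+(u_n)$ by $\|u_n\|^p$, the three seminorm terms stay bounded while $\int_\Omega F(x,u_n^+)/\|u_n\|^p\ge K\|v_n^+\|_p^p-o(1)\to K\|v\|_p^p$, and letting $K\to\infty$ (using $v\ne 0$) gives a contradiction. If $v=0$: choose $t_n\in[0,1]$ realizing $\max_{t\in[0,1]}\E_+(tu_n)$ and show, evaluating $\E_+$ at $(2pM)^{1/p}v_n$ and using $\tfrac1{2p}[v_n]^p+\tfrac1p\|v_n\|_p^p\ge\tfrac1{2p}$ together with $\|v_n^+\|_p\to 0$ and $\int_\Omega F(x,(2pM)^{1/p}v_n^+)\,dx\to 0$ (from $(f_1)$ and the compact embedding), that $\E_+(t_nu_n)\to+\infty$; since $\E_+(0)=0$ and $\E_+(u_n)\to c$, necessarily $t_n\in(0,1)$ for large $n$, hence $\langle\E'_+(t_nu_n),t_nu_n\rangle=0$ and $p\E_+(t_nu_n)=\int_\Omega\sigma(x,t_nu_n^+)\,dx$; finally, as $0\le t_nu_n^+\le u_n^+$, $(f_3)$ gives $p\E_+(t_nu_n)\le\vartheta\int_\Omega\sigma(x,u_n^+)\,dx+\|\beta^*\|_1$, which is bounded --- a contradiction. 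Hence $(u_n)_n$ is bounded.

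Finally, with $(u_n)_n$ bounded, pass to a subsequence $u_n\rightharpoonup u$ in $X$, $u_n\to u$ in $L^q(\Omega)$ for $q<p^*_s$ and a.e. Then $\langle\E'_+(u_n),u_n-u\rangle\to 0$, and writing $\langle\E'_+(u_n),u_n-u\rangle=\tfrac1{2p}\langle A'(u_n),u_n-u\rangle$ plus the terms carrying $|u_n|^{p-2}u_n$, $(u_n^+)^{p-1}$ and $f(x,u_n^+)$, the latter all vanish by the strong $L^p$/$L^r$ convergence together with the subcritical growth $(f_1)$. Thus $\langle A'(u_n),u_n-u\rangle\to 0$, so in particular $\limsup_n\langle A'(u_n),u_n-u\rangle\le 0$, and Proposition \ref{s+} (the $(S)_+$ property of $A'$) yields $u_n\to u$ in $X$, which concludes the verification.
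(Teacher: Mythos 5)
Your proof is correct and follows essentially the same strategy as the paper's: testing with $-u_n^-$ to kill the negative part, bounding $\int_\Omega\sigma(x,u_n^+)\,dx$, the two-case contradiction argument (superlinearity of $F$ when the normalized weak limit is nontrivial, the interior-maximum $t_n$ trick combined with $(f_3)$ when it vanishes), and concluding via the $(S)_+$ property of Proposition \ref{s+}. The only differences are harmless streamlinings: you use the exact identity $p\E_+(u_n)-\langle\E_+'(u_n),u_n\rangle=\int_\Omega\sigma(x,u_n^+)\,dx$ instead of the paper's chain of inequalities, and you normalize $u_n$ and maximize $t\mapsto\E_+(tu_n)$ rather than working with $u_n^+$, which spares the comparison $\E_+(u_n^+)\le\E_+(u_n)$ via \eqref{11}.
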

\begin{proof}
We do the proof for ${\mathscr E}_+$, the proof for ${\mathscr E}_-$ being analogous.

Let $(u_n)_n$ in $X$ be such that 
\begin{equation}\label{cer1}
|{\mathscr E}_+(u_n)|\leq M_1
\end{equation}
for some $M_1>0$ and all $n\geq 1$, and 
\begin{equation}\label{cer2}
(1+\|u_n\|){\mathscr E}_+'(u_n)\to 0
\end{equation}
in $X'$ as $n\to \infty$.
From \eqref{cer2} we have
$$|{\mathscr E}_+'(u_n)(h)|\leq \frac{\varepsilon_n\|h\|}{1+\|u_n\|} $$
for every $h\in X$ and with $\varepsilon_n \to 0$ as $n \to \infty$, that is 
\begin{equation}\label{vs}
\begin{aligned}
\left|\frac{1}{2}\int \int_\Q\right.&\frac{J_p(u_n(x)-u_n(y))(h(x)-h(y))}{|x-y|^{N+ps}}\,dxdy
+\int_\Omega |u_n|^{p-2}u_nh\,dx \\
&- (\lambda +1) \int_\Omega |u_n^+|^{p-2}u_n^+ h\,dx \left. -\int_\Omega f(x,u_n^+)h\,dx\right|
\leq \frac{\varepsilon_n\|h\|}{1+\|u_n\|}. 
\end{aligned}
\end{equation}
Taking $h=-u_n^-$ in \eqref{vs},  we obtain
\begin{equation}\label{a0}
\frac{1}{2}\int \int_\Q\frac{J_p(u_n(x)-u_n(y))(u_n^-(y)-u_n^-(x))}{|x-y|^{N+ps}}\,dxdy +\lambda\int_\Omega |u_n^-|^pdx\leq \varepsilon_n,
\end{equation}
and by \eqref{disug} we get
\[
\int \int_\Q\frac{|u_n^-(x)-u_n^-(y)|^p}{|x-y|^{N+ps}}\,dxdy+2\lambda\int_\Omega |u_n^-|^pdx \leq 2\varepsilon_n.
\]
As a consequence, we get that
\begin{equation}\label{vs-}
u_n^-\to 0 \mbox{ in $X$ as $n\to \infty$}.
\end{equation}
In particular, $(u_n^-)_n$ is bounded in $X$.

On the other hand, taking $h=-u_n^+$ in \eqref{vs}, we get
\begin{equation}\label{vs+}
\begin{aligned}
-\frac{1}{2}\int \int_\Q &\frac{J_p(u_n(x)-u_n(y))(u_n^+(x)-u_n^+(y))}{|x-y|^{N+ps}}\,dxdy \\
&+ \lambda \int_\Omega |u_n^+|^p\,dx
 +\int_\Omega f(x,u_n^+)u_n^+\,dx\leq \varepsilon_n.
\end{aligned}
\end{equation}

From \eqref{cer1} we know that
\begin{equation}\label{maggio}
\frac{1}{2}[u_n]^p + \int_\Omega |u_n|^p\,dx
- (\lambda+1) \int_\Omega |u_n^+|^p\,dx
 -p\int_\Omega F(x,u_n^+)\,dx\leq pM_1
\end{equation}
for all $n\geq 1$. Now, by \eqref{a0} and \eqref{vs-}, we have that
\[
\int \int_\Q\frac{J_p(u_n(x)-u_n(y))(u_n^-(x)-u_n^-(y))}{|x-y|^{N+ps}}\,dxdy \to 0,
\]
and so from \eqref{maggio} we get
\begin{equation}\label{m2}
\begin{aligned}
\frac{1}{2}\int \int_\Q &\frac{J_p(u_n(x)-u_n(y))(u_n^+(x)-u_n^+(y))}{|x-y|^{N+ps}}\,dxdy \\
&+ \int_\Omega |u_n|^p\,dx - (\lambda+1) \int_\Omega |u_n^+|^p\,dx
 -p\int_\Omega F(x,u_n^+)\,dx\leq M_2
 \end{aligned}
\end{equation}
for some $M_2>0$ and all $n\geq 1$.
Adding \eqref{m2} to \eqref{vs+} we obtain
\[
 \int_\Omega |u_n|^p\,dx
- \int_\Omega |u_n^+|^p\,dx +\int_\Omega f(x,u_n^+)u_n^+ \,dx -p \int_\Omega F(x,u_n^+)\,dx \leq M_3
\] 
for some $M_3>0$ and all $n\geq 1$, which clearly implies
\begin{equation}\label{m3}
\int_\Omega \sigma(x,u_n^+)\,dx \leq M_3.
\end{equation}

Now we claim that $(u_n^+)_n$ is bounded in $X$, as well. We argue by contradiction. Up to a subsequence,
we assume that $\|u_n^+\|\to \infty$ as $n\to \infty$. Defining $y_n=u_n^+/\|u_n^+\|$, we can assume that
\begin{equation}\label{cdeb}
y_n \rightharpoonup y \quad \text{in } X \quad \text{and } y_n\to y \quad \text{in } L^q(\Omega)
\end{equation}
for every $q\in (p,p^*_s)$ with $y\geq 0$ in $\Omega$.

First we deal with the case $y\not \equiv 0$.
We define $Z(y)=\lbrace x\in\Omega : y(x)=0\rbrace$, and so we have 
$\left|\Omega \setminus Z(y)\right|>0$ and $u_n^+\to \infty$ for almost every $x\in \Omega \setminus Z(y)$ as $n\to \infty$.
By ($f_2$), we have
$$\frac{F(x,u_n^+(x))}{\|u_n^+\|^p}=\frac{F(x,u_n^+(x))}{u_n^+(x)^p}y_n(x)^p \to \infty $$
for almost every $x\in \Omega \setminus Z(y)$. From Fatou's Lemma we get that
$$\int_\Omega \liminf_{n\to \infty} \frac{F(x,u_n^+(x))}{\|u_n^+\|^p}\,dx \leq 
\liminf_{n\to \infty}\int_\Omega \frac{F(x,u_n^+(x))}{\|u_n^+\|^p}\, dx,  $$
and so 
\begin{equation}\label{fatu}
\int_\Omega \frac{F(x,u_n^+(x))}{\|u_n^+\|^p}\, dx \to \infty
\end{equation}
as $n\to \infty$.

Again from \eqref{cer1}  we have
$$-\frac{1}{2p}[u_n]^p -\frac{1}{p} \int_\Omega |u_n|^p\,dx
+ \frac{\lambda +1}{p} \int_\Omega |u_n^+|^p\,dx+\int_\Omega F(x,u_n^+)\,dx \leq M_4$$
for some $M_4>0$ and $n\geq 1$. From \eqref{disug2} we get
\[
-\frac{2^{p-2}}{p}([u_n^+]^p+[u_n^-]^p)-\frac{1}{p} \int_\Omega |u_n|^p\,dx 
+ \frac{\lambda +1}{p} \int_\Omega |u_n^+|^p\,dx +\int_\Omega F(x,u_n^+)\,dx \leq M_4,
\]
and from \eqref{vs-}
\[
-\frac{2^{p-2}}{p}[u_n^+]^p
+ \frac{\lambda }{p} \int_\Omega |u_n^+|^p\,dx +\int_\Omega F(x,u_n^+)\,dx \leq M_5,
\]
for some $M_5>0$ and all $n\geq1$, so that
\begin{align*}
\int_\Omega F(x,u_n^+)\,dx \leq M_5+c\|u_n^+\|^p
\end{align*}
for some $c>0$ and all $n\geq1$. Dividing by $\|u_n^+\|^p$ and passing to the limit we obtain
\[
\limsup_{n\to \infty}\int_\Omega \frac{F(x,u_n^+(x))}{\|u_n^+\|^p}\, dx \leq M_6  
\]
for some $M_6$, which is in contradiction with \eqref{fatu}, and this concludes the case $y\neq 0$.

Now, we deal with the case $y\equiv 0$. We consider the continuous functions $\gamma_n:[0,1]\to\R$, defined as
\[
\gamma_n(t):={\mathscr E}_+(tu_n^+)
\]
for any $n\geq 1$. So, there exists $t_n\in[0,1]$ such that
\begin{equation}\label{max}
\gamma_n(t_n)=\max_{t\in[0,1]}\gamma_n(t).
\end{equation} 
Now, fixed $\mu>0$,  we define $v_n:=(p\mu)^\frac{1}{p}y_n\in X$. From \eqref{cdeb} we get that $v_n \to 0$ 
in $L^q(\Omega)$ for all $q\in (p,p^*_s)$. From ($f_1$) we know that
$$\int_\Omega F(x,v_n(x))\,dx \leq \int_\Omega a(x)|v_n(x)|\,dx + C\int_\Omega |v_n(x)|^r\,dx, $$
and so 
\begin{equation}\label{to0}
\int_\Omega F(x,v_n(x))\,dx \to 0
\end{equation}
as $n\to \infty$.
Since $\|u_n^+\|\to \infty$, there exists $n_0\geq 1$ such that 
$(p\mu)^\frac{1}{p} /\|u_n^+\| \in(0,1)$ for all $n\geq n_0$. Then, from \eqref{max}, we have
$$\gamma_n(t_n)\geq \gamma_n\left(\frac{(p\mu)^\frac{1}{p}}{\|u_n^+\|} \right) $$
for all $n\geq n_0$.
Thus, we get
\begin{align*}
&{\mathscr E}_+(t_nu_n^+) \geq {\mathscr E}_+((p\mu)^\frac{1}{p}y_n)={\mathscr E}_+(v_n) \\
& =\frac{1}{2}\mu  \int \int_\Q \frac{|y_n(x)-y_n(y)|^p}{|x-y|^{N+ps}}\,dxdy 
- \frac{\lambda}{p}  \int_\Omega v_n^p\,dx - \int_\Omega F(x,v_n(x))\,dx  \\
&=\frac{\mu}{2} \|y_n\|^p -\frac{\mu}{2}\int_\Omega y_n^pdx- \frac{2\lambda +1}{2p}  \int_\Omega v_n^p\,dx - \int_\Omega F(x,v_n(x))\,dx\\
&=\frac{\mu}{2} - \frac{2\lambda +1}{2p}  \int_\Omega v_n^p\,dx - \int_\Omega F(x,v_n(x))\,dx
\end{align*} 
From \eqref{to0} and the fact that $v_n\to 0$ in $L^p(\Omega)$, we get that
$$
{\mathscr E}_+(t_nu_n^+)\geq \frac{\mu}{2} + o(1),
$$ 
where $o(1)\to 0$ as $n\to \infty$. Since $\mu$ is arbitrary, we have
\begin{equation}\label{toinf}
\lim_{n\to \infty}{\mathscr E}_+(t_nu_n^+)=+ \infty.
\end{equation} 
 
On the other hand, since $0\leq t_nu_n^+\leq u_n^+$ for all $n\leq 1$, from ($f_3$) we get
\begin{equation}\label{sigma}
\int_\Omega \sigma(x,t_nu_n^+)\,dx \leq \vartheta \int_\Omega \sigma(x,u_n^+)\,dx + \|\beta^*\|_1
\end{equation}
for all $n\geq 1$.

In addition, we have that ${\mathscr E}_+(0)=0$; moreover, from \eqref{11}  we get that
\[
\E_+(u_n^+)\leq \E_+(u_n)\leq M_1
\]
for all $n\ge1$ by \eqref{cer1}. Together with \eqref{toinf}, these two facts imply the existence of $ n_1\geq n_0$ such that  $t_n\in (0,1)$ for all
$n\geq n_1$, namely $t_n\neq0$ and $t_n\neq1$.
Since $t_n$ is a maximum point for $\gamma_n$, we have 
\begin{equation}\label{zero}
\begin{aligned}
0&= t_n\gamma_n'(t_n) \\
&=\frac{1}{2}\int \int_\Q \frac{|t_nu_n^+(x)-t_nu_n^+(y)|^p}{|x-y|^{N+ps}}\,dxdy \\
&- \lambda \int_\Omega |t_nu_n^+|^p\,dx- \int_{\Omega} f(x,t_nu_n^+(x))t_nu_n^+(x)\,dx.
\end{aligned}
\end{equation}
Adding \eqref{zero} to \eqref{sigma}, we get
\begin{align*}
\frac{1}{2}\int \int_\Q& \frac{|t_nu_n^+(x)-t_nu_n^+(y)|^p}{|x-y|^{N+ps}}\,dxdy \\
&- \lambda \int_\Omega |t_nu_n^+|^p\,dx - p \int_\Omega F(x,t_nu_n^+(x))\,dx \\ 
&\leq \vartheta \int_\Omega \sigma(x,u_n^+)\,dx + \|\beta^*\|_1,
\end{align*}
which is 
$$p{\mathscr E}_+(t_nu_n^+)\leq  \vartheta \int_\Omega \sigma(x,u_n^+)\,dx + \|\beta^*\|_1.$$
So, from \eqref{toinf}, we get
\begin{equation}\label{toinf2}
\lim_{n\to\infty}\int_\Omega \sigma(x,u_n^+)\,dx = \infty.
\end{equation}
Comparing \eqref{m3} and \eqref{toinf2} we obtain a contradiction, and so the claim follows.

In conclusion, we have proved that $(u_n^+)_n$ is bounded in $X$, so from \eqref{disug2} and \eqref{vs-} we have that $(u_n)_n$ is bounded in $X$.
Hence, we can assume that
\begin{equation}\label{cdeb2}
u_n \rightharpoonup u \quad \text{in } X \quad \text{and } u_n\to u \quad \text{in } L^q(\Omega)
\end{equation}
for every $q\in (p,p^*_s)$ as $n\to \infty$. Taking $h=u_n-u$ in \eqref{vs}, we have
\begin{equation}\label{vss}
\begin{aligned}
&\left|\frac{1}{2}\int \int_\Q  \frac{|u_n(x)-u_n(y)|^p}{|x-y|^{N+ps}}\,dxdy \right. \\
&-\frac{1}{2}\int \int_\Q \frac{J_p(u_n(x)-u_n(y))(u(x)-u(y))}{|x-y|^{N+ps}}\,dxdy 
 +\int_\Omega |u_n|^{p-2}u_n(u_n-u)  \\
&\left.- (\lambda+1)\int_\Omega |u_n^+|^{p-2}u_n^+(u_n-u) \,dx 
- \int_{\Omega} f(x,u_n^+)(u_n-u)\,dx\right|\leq\varepsilon_n.
\end{aligned}
\end{equation}
From ($f_1$) and \eqref{cdeb2}, we have
\[
\int_{\Omega} f(x,u_n^+(x))(u_n(x)-u(x))\,dx \to 0, 
\]
\[
\int_\Omega |u_n|^{p-2}u_n(u_n-u)\to 0
\]
and 
\[
\int_\Omega |u_n^+|^{p-2}u_n^+(u_n-u)\to 0
\]
as $n\to \infty$. 
Passing to the limit in \eqref{vss}, we get
\begin{align*}
&\int \int_\Q  \frac{|u_n(x)-u_n(y)|^p}{|x-y|^{N+ps}}\,dxdy  \\
&-\int \int_\Q \frac{J_p(u_n(x)-u_n(y))(u(x)-u(y))}{|x-y|^{N+ps}}\,dxdy \to 0
\end{align*}
as $n\to \infty$. From Proposition \ref{s+} we can conclude that $u_n \to u$ in $X$ and this concludes the proof that ${\mathscr E}_+$ satisfies $(C)_c$ for every $c\in \R$.

Proceeding analogously, we have that $\E_-$ satisfies $(C)_c$ for every $c\in \R$, as well.
\end{proof}

Now we are ready to give the proof Theorem \ref{thlc}.

\begin{proof}[Proof of Theorem $\ref{thlc}$]
First, we want to apply Theorem \ref{critp} to $\E_+$.
So, as before, let $(\lambda_m)_m$ be the sequence of eigenvalues defined in \eqref{lm}.
As in the proof of Theorem \ref{teolink}, there exists $m\geq 1$ such that $\lambda_m \leq 2\lambda+1 < \lambda_{m+1}$, and we use the same two symmetric closed cones $C_m^-$ and $C_m^+$
with $C_m^-\cap C_m^+ =\lbrace0\rbrace$.
By Theorem \ref{index} we also have
\[
i(C_m^-\setminus \lbrace0\rbrace)= i(X\setminus C_m^+)=m.
\]
In a similar way to the proof of Theorem \ref{teolink}, by ($f_1$), ($f_4$) and taking $u\in C_m^+$ we have
\[
\begin{aligned}
\E_+(u)&\geq\frac{1}{2p}\|u\|^p -\frac{2\lambda+1}{2p}\int_\Omega |u^+|^pdx
-\frac{\ve}{2p} \int_\Omega |u^+|^pdx-C_\ve \int_\Omega |u^+|^qdx\\
&\geq\frac{1}{2p}\|u\|^p -\frac{2\lambda+1}{2p}\int_\Omega |u|^pdx
-\frac{\ve}{2p} \int_\Omega |u|^pdx-C_\ve \int_\Omega |u|^qdx\\
&\geq\frac{1}{2p}\|u\|^p -\frac{1}{2p\lambda_{m+1}}\left(2\lambda+1+\ve\right)[u]^p-C_\ve \int_\Omega |u|^qdx\\
& \geq\frac{1}{2p}\left(1-\frac{2\lambda+1+\ve}{\lambda_{m+1}}\right)\|u\|^p -C\|u\|^q
\end{aligned}
\]
for some $C>0$. So there exists $r_+>0$ and $\alpha>0$ such that, if $\|u\|=r_+$ then $\E_+(u)\geq \alpha$.

On the other hand, taking $u\in C_m^-$, $e\in X\setminus C_m^-$ with $e^+\neq 0$ and $t>0$, from ($f_2$) we get
\begin{align*}
&\E_+(u+te)  \leq \frac{1}{2p}\|u+te\|^p -\frac{2\lambda+1}{2p}\int_\Omega |(u+te)^+|^pdx -\int_ \Omega F(x,(u+te)^+)\,dx\\
&\leq \frac{1}{2p}\|u+te\|^p \left( 1- \int_ \Omega \frac{F(x,(u+te)^+)}{((u+te)^+)^p} \frac{((u+te)^+)^p}{\|u+te\|^p}\,dx \right)
 \to -\infty
\end{align*} 
as $t \to +\infty$. So, there exists $r_->r_+$ such that $\E_+(u)\leq 0$ when $u\in C_m^-+\R^+e$ and $\|u\|\geq r_-$.

Again, we define $D_-,S_+,Q$ and $H$ as in Theorem \ref{geom}.
By Corollary \ref{corgeom} we have that $(Q,D_-\cup H)$ links $S_+$
cohomologically in dimension $m+1$ over $\mathbb{Z}_2$. In particular, $(Q,D_-\cup H)$ links $S_+$.
In addition, $\E_+$ is bounded on $Q$, $\E_+(u)\leq 0$ for every $u\in D_-\cup H$ and $\E_+(u)\geq \alpha>0$ 
for every $u\in S_+$. Moreover, by Proposition \ref{C} $(C)_c$ holds as well.

By Theorem \ref{critp}, $\E_+$ admits a critical value $c\geq \alpha$, hence a critical point $u$ with $\E_+(u)>0$.
In particular, we have
\begin{align*}
0&=-\frac{1}{2} \int \int_\Q \frac{J_p(u(x)-u(y))(u^-(x)-u^-(y))}{|x-y|^{N+ps}}\,dxdy - \int_\Omega |u|^{p-2}uu^-\,dx\\
&+(\lambda+1) \int_\Omega |u^+|^{p-2}u^+u^-\,dx +\int_ \Omega f(x,u^+)u^-\,dx \\
&= -\frac{1}{2}\int \int_\Q \frac{J_p(u(x)-u(y))(u^-(x)-u^-(y))}{|x-y|^{N+ps}}\,dxdy + \int_\Omega (u^-)^pdx.
\end{align*}
From \eqref{disug} we get
\[
0\geq \int\int_\Q\frac{|u^-(x)-u^-(y)|^p}{|x-y|^{N+ps}}\, dxdy + \int_\Omega (u^-)^pdx
\]
so that $u^-\equiv 0$ and $u\geq 0$. As a consequence, $\E_+(u)=\E(u)$,
and so $u\geq 0$ is a nontrivial solution of \eqref{plinkc}.

Arguing in the same way for $\E_-$, we can find a nontrivial negative solution $v$ for \eqref{plinkc}.

By the maximum principle (see, for instance, \cite{DPRS} and \cite{MPV} for the Robin problem and also \cite{musina} for some linear cases), we can conclude that $u>0$ and $v<0$ a.e. in $\R^N$.
\end{proof}

\section{A problem with linear growth}\label{secsaddle}
In this section we consider the problem
\begin{equation}\label{psad}
\begin{cases}
(-\Delta)^s_p u = g(x,u) \quad $ in $ \Omega,
\\
\mathscr{N}_{s,p}u=0  \quad \quad $ in $ \R^N \setminus \overline{\Omega},
\end{cases},
\end{equation}
where $\Omega$ is as before and $g:\Omega\times \R\to \R$ is a Carath\'eodory function with $p-$linear growth; namely, there exist $a\in L^{p'}(\Omega)$ and $b\in \R$ such that
\begin{equation}\label{gbound}
|g(x,t)|\leq a(x)+b|t|^{p-1}
\end{equation}
for every $t\in \R$ and for a.e. $x\in \Omega$.

As usual, we define the functional 
\[
I(u):=\frac{1}{2p}\int\int_\Q\frac{|u(x)-u(y)|^p}{|x-y|^{N+ps}}\, dxdy -\int_\Omega G(x,u)\,dx
\]  
so that every critical point of $I$ is a weak solution of \eqref{psad}.

In order to state our result, we need to introduce
\begin{equation}\label{asu}
\overline{\alpha}(x):=\limsup_{|t|\to \infty}\frac{g(x,t)}{|t|^{p-2}t}
\end{equation}
for a.e. $x \in \Omega$.
Then we have:
\begin{theorem}\label{thsad}
Assume \eqref{gbound}. 
If $\overline{\alpha}(x)<\lambda_1=0$,
then problem \eqref{psad} admits a weak solution.
\end{theorem}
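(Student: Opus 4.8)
The plan is to apply the direct method: I will show that the functional $I$ is coercive and sequentially weakly lower semicontinuous on $X$, so that it attains its infimum at some $u_0\in X$; since $I\in C^1(X)$, this $u_0$ satisfies $I'(u_0)=0$ and is therefore a weak solution of \eqref{psad}. Weak lower semicontinuity will be routine: $u\mapsto[u]^p$ is convex and continuous on $X$, hence sequentially weakly l.s.c., while if $u_n\rightharpoonup u$ in $X$ then $u_n\to u$ in $L^p(\Omega)$ by the compact embedding, and \eqref{gbound} makes the Nemytskii operator $u\mapsto G(\cdot,u)$ continuous from $L^p(\Omega)$ into $L^1(\Omega)$, whence $\int_\Omega G(x,u_n)\,dx\to\int_\Omega G(x,u)\,dx$.

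The heart of the argument is coercivity, and it is here that the assumption $\overline\alpha(x)<\lambda_1=0$ enters. I would argue by contradiction: suppose $\|u_n\|\to\infty$ while $I(u_n)\le M$, and set $v_n:=u_n/\|u_n\|$; up to a subsequence, $v_n\rightharpoonup v$ in $X$, $v_n\to v$ in $L^p(\Omega)$ and a.e.\ in $\Omega$, and $|v_n|\le w$ a.e.\ for some $w\in L^p(\Omega)$. From \eqref{gbound}, $G(x,t)\le a(x)|t|+\tfrac{b}{p}|t|^p$, so $G(x,u_n(x))/\|u_n\|^p\le a(x)w(x)+\tfrac{b}{p}w(x)^p=:h(x)\in L^1(\Omega)$; moreover an elementary integration of \eqref{asu} gives $\limsup_{|t|\to\infty}G(x,t)/|t|^p\le\overline\alpha(x)/p$ for a.e.\ $x$, and since $|u_n(x)|\to\infty$ for a.e.\ $x\in\{v\neq 0\}$ one checks that $\limsup_n G(x,u_n(x))/\|u_n\|^p\le\ell(x)$ a.e., where $\ell(x)=\tfrac{\overline\alpha(x)}{p}|v(x)|^p$ on $\{v\neq 0\}$ and $\ell(x)\le 0$ on $\{v=0\}$. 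By the reverse Fatou lemma,
\[
\limsup_{n\to\infty}\int_\Omega\frac{G(x,u_n(x))}{\|u_n\|^p}\,dx\le\int_{\{v\neq 0\}}\frac{\overline\alpha(x)}{p}|v(x)|^p\,dx\le 0 .
\]
On the other hand, dividing $I(u_n)\le M$ by $\|u_n\|^p$ gives
\[
\frac{1}{2p}[v_n]^p\le\frac{M}{\|u_n\|^p}+\int_\Omega\frac{G(x,u_n(x))}{\|u_n\|^p}\,dx ,
\]
so $\limsup_n[v_n]^p\le 0$, i.e.\ $[v_n]\to 0$; by weak lower semicontinuity $[v]=0$, hence $v$ is constant, and since $\|v_n\|^p=[v_n]^p+\|v_n\|_{L^p(\Omega)}^p\to\|v\|_{L^p(\Omega)}^p$ while $\|v_n\|=1$, we get $\|v\|_{L^p(\Omega)}=1$, so $v$ is a \emph{nonzero} constant. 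Then $|\{v\neq 0\}|=|\Omega|>0$, so the reverse--Fatou bound sharpens to $\limsup_n\int_\Omega G(x,u_n)/\|u_n\|^p\,dx\le\int_{\{v\neq 0\}}\tfrac{\overline\alpha(x)}{p}|v(x)|^p\,dx<0$ (strictly, since $\overline\alpha<0$ a.e.), whereas the second display also yields $\int_\Omega G(x,u_n)/\|u_n\|^p\,dx\ge\tfrac{1}{2p}[v_n]^p-M/\|u_n\|^p\to 0$. This contradiction shows that $I$ is coercive.

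Given both properties, $I$ attains its minimum over the reflexive space $X$ at some $u_0$, and $I'(u_0)=0$ finishes the proof. Alternatively, the same estimates show that $I\to-\infty$ along the cone of constants $C_1^-$ while $\inf_{C_1^+}I>-\infty$, which is exactly the saddle geometry of assertion (a) of Theorem \ref{geom} with $m=i(C_1^-\setminus\{0\})=1$; one could then conclude via Theorem \ref{critp}, checking the Cerami condition as in Proposition \ref{C} with the aid of \eqref{asu}. I expect the coercivity step to be the main obstacle, specifically the upper estimate for $\int_\Omega G(x,u_n)/\|u_n\|^p$: it needs at once the $L^1$-domination furnished by the compact embedding into $L^p(\Omega)$ (to justify the reverse Fatou lemma) and the fact that the normalized weak limit $v$ is forced to be a nonzero constant — precisely the configuration on which $\overline\alpha<\lambda_1=0$ yields a strictly negative contribution.
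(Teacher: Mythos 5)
Your proposal is correct and follows essentially the same route as the paper: minimization via the Weierstrass theorem, with weak lower semicontinuity from convexity and the compact embedding, and coercivity obtained by normalizing a diverging sequence, dominating $G(x,u_n)/\|u_n\|^p$ and applying the reverse (generalized) Fatou lemma together with $\limsup_{|t|\to\infty}G(x,t)/|t|^p\le\overline{\alpha}(x)/p<0$. Your coercivity step is merely organized a bit differently (a contradiction argument forcing the normalized weak limit to be a nonzero constant, rather than the paper's direct case split on whether that limit vanishes), but the ingredients and conclusion coincide.
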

\begin{proof}
In this case we shall obtain one solution by applying the Weierstrass Theorem to $I$.

First, we claim that 
\begin{equation}\label{lsup1}
\limsup_{|t|\to \infty}\frac{G(x,t)}{|t|^p}\leq \frac{\overline{\alpha}(x)}{p},
\end{equation}
where $G(x,t):= \int_0^t g(x,\tau)\, d\tau$. By \eqref{asu}, for every $\varepsilon>$ there exists $K>0$ such that
\[
\frac{g(x,t)}{t^{p-1}}<\overline{\alpha}(x)+\varepsilon
\]
for $t\geq K$ and a.e. $x\in \Omega$. Reasoning in a similar way for $t<0$ and integrating gives
\[
G(x,t)\leq \frac{\overline{\alpha}(x)+\varepsilon}{p}(|t|^p-K^p)+\max\left\{G(x,K),G(x,-K)\right\}
\]
for $|t|\geq K$. Hence,
\[
\limsup_{|t|\to \infty}\frac{G(x,t)}{|t|^p}\leq \frac{\overline{\alpha}(x)}{p}
\]
as claimed.

Now we want to prove that \eqref{lsup1} implies that
\begin{equation}\label{toinfs}
\liminf_{\|u\|\to \infty}\frac{I(u)}{\|u\|^p}>0.
\end{equation}
Indeed, take a sequence $(u_n)_n$ in $X$ such that $\|u_n\|\to \infty$. Up to a subsequence, we can assume that 
$v_n:=\frac{u_n}{\|u_n\|}$ converges to some $u$ weakly in $X$ and strongly in $L^p(\Omega)$.
Moreover, $\|u\|\leq 1$, and also
\[
\frac{G(x,u_n)}{\|u_n\|^p}\leq \frac{a(x)|u_n|+b|u_n|^p/p}{\|u_n\|^p}\to \frac{b}{p}|u|^p
\]
in $L^1(\Omega)$ as $n\to \infty$. By the generalized Fatou Lemma we have
\begin{equation}\label{FL}
\limsup_{n\to \infty}\int_\Omega \frac{G(x,u_n)}{\|u_n\|^p}\, dx
\leq \int_\Omega \limsup_{n\to \infty}\frac{G(x,u_n)}{\|u_n\|^p}\, dx.
\end{equation}  
If $(u_n(x))_n$ is bounded,
\[
\frac{G(x,u_n(x))}{\|u_n\|^p}\to 0,
\]
while if $|u_n(x)|\to \infty $,
\[
\limsup_{n\to \infty} \frac{G(x,u_n(x))}{\|u_n\|^p}
=\limsup_{n\to \infty} \frac{G(x,u_n(x))}{|u_n(x)|^p} \frac{|u_n(x)|^p}{\|u_n\|^p}\leq \frac{\overline{\alpha}(x)}{p}|u(x)|^p\leq 0.
\]
In both cases 
\[
\limsup_{n\to \infty}\int_\Omega \frac{G(x,u_n)}{\|u_n\|^p}\,dx\leq 0,
\]
but when $u\ne0$, we have
\begin{equation}\label{aggiunta}
\limsup_{n\to \infty}\int_\Omega \frac{G(x,u_n)}{\|u_n\|^p}\,dx<0.
\end{equation}
Therefore, if $u\neq0$ in $\Omega$,  we have
\[
\liminf_{n\to \infty}\frac{I(u_n)}{\|u_n\|^p}\geq -\int_\Omega  \frac{G(x,u_n)}{\|u_n\|^p}\,dx,
\]
and so by \eqref{aggiunta} we get
\begin{equation}\label{aggiunta2}
\liminf_{n\to \infty}\frac{I(u_n)}{\|u_n\|^p}>0.
\end{equation}
On the other hand, if $u\equiv0$ in $\Omega$,
\[
\frac{I(u_n)}{\|u_n\|^p}=\frac{1}{2p}-\int_\Omega\frac{|u_n|^p}{\|u_n\|^p}dx -\int_\Omega\frac{G(x,u_n)}{\|u_n\|^p}dx,
\]
and so \eqref{aggiunta2} holds also in this case.

Since \eqref{aggiunta2} holds for every diverging sequence, \eqref{toinfs} holds, as well.

In conclusion, it is easy to show that $I$ is lower semicontinuous, while it is coercive from \eqref{toinfs}. So we can
apply the Weierstrass Theorem to find a minimum for $I$, which is a solution of problem \eqref{psad}.

\end{proof}

\section*{Acknowledgments}
The first author is Member of the Gruppo Nazionale per l'Analisi Matematica, la Probabilit\`a  a e le loro Applicazioni (GNAMPA) of the Istituto Nazionale di Alta Matematica (INdAM) ``F. Severi''. He is supported by the MIUR National Research Project {\sl Variational methods, with applications to problems in mathematical physics and geometry} (2015KB9WPT\underline{\ }009) and by the FFABR ``Fondo per il finanziamento delle attivit\`a base di ricerca'' 2017.

The second author is is Member of the Gruppo Nazionale per l'Analisi Matematica, la Probabilit\`a  a e le loro Applicazioni (GNAMPA) of the Istituto Nazionale di Alta Matematica (INdAM) ``F. Severi''.

\end{document}